\documentclass[12pt]{amsart}
\usepackage{amssymb,mathrsfs,color,bbold}
 \usepackage[all]{xy}
\usepackage{color}

\textwidth = 420pt
\hoffset = -30pt
\theoremstyle{plain}
\newtheorem{theorem}{Theorem}[section]
\newtheorem{proposition}[theorem]{Proposition}
\newtheorem{corollary}[theorem]{Corollary}
\newtheorem{lemma}[theorem]{Lemma}

\theoremstyle{definition}

\newtheorem{example}[theorem]{Example}
\newtheorem{definition}[theorem]{Definition}

\newcommand{\abs}[1]{\lvert#1\rvert}
\newcommand{\norm}[1]{\lVert#1\rVert}

\newcommand{\la}{\langle}
\newcommand{\ra}{\rangle}

\renewcommand{\span}{\operatorname{span}}

\newcommand{\R}{{\mathbb R}}

\newcommand{\Xn}{{X_n^\sim}}
\newcommand{\Xnn}{{(X_n^\sim)_n^\sim}}

\newcommand{\Xu}{{X_{uo}^\sim}}
\newcommand{\al}{\alpha}

\author[N.~Gao]{Niushan Gao}
\address{Department of Mathematics and Computer Science, 
University of Lethbridge, Lethbridge, Canada T1K 3M4}
\email{gao.niushan@uleth.ca}

\author[D.~Leung]{Denny H.~Leung}
\address{Department of Mathematics, National University of Singapore, Singapore
117543}
\email{matlhh@nus.edu.sg}

\author[F.~Xanthos]{Foivos Xanthos}
\address{Department of Mathematics, 
Ryerson University, 350 Victoria St.,
Toronto, ON, M5B 2K3, Canada.}
\email{foivos@ryerson.ca}

\title[Duality for uo-convergence]{Duality for unbounded order convergence and
applications}

\keywords{Unbounded order dual, order continuous dual, order continuous predual,
monotonically complete Banach lattices, representation of convex functionals}

\subjclass[2010]{46B42, 46B10, 46A20, 46E30}

\thanks{The first author is a PIMS Postdoctoral Fellow. The second author is
partially supported by AcRF grant R-146-000-242-114. The third author
acknowledges the support of an NSERC grant.}

\date{\today}

\begin{document}

\begin{abstract}
Unbounded order convergence has lately been systematically
studied as a generalization of almost everywhere convergence to the abstract
setting of vector and Banach lattices. This paper presents a duality theory for
unbounded
order convergence. We define the unbounded order dual  (or uo-dual) $\Xu$ of a
Banach lattice $X$  and identify it as the order continuous
part of the order continuous dual $\Xn$.  The result allows us
to characterize the Banach lattices that have order continuous preduals and to
show that an order continuous predual is unique when it exists.  
Applications to the Fenchel-Moreau duality theory of convex functionals are
given.  
The applications are of interest in the theory of risk measures in Mathematical
Finance.
\end{abstract}

\maketitle

\section{Introduction}

Let $X$ be a vector lattice.  A net $(x_\alpha)$ in $X$ is said to {\em order
converge} to $x\in X$, written as $x_\al\stackrel{o}{\to}x$,  if there is
another net $(y_\gamma)$ in $X$ such that $y_\gamma \downarrow 0$ and that for
every 
$\gamma$, there exists $\al_0$ such that $\abs{x_\al - x} \leq y_\gamma$ for all
$\al \geq \al_0$.
If $X$ is a lattice ideal of the space of
all real-valued measurable functions $L^0(\Omega,\Sigma,\mu)$ on a measure space
$(\Omega,\Sigma,\mu)$, then order convergence of a sequence in $X$ is
equivalent to dominated almost everywhere convergence.
For obvious reasons, both theoretically and in applications, a generalization of
a.e.\ convergence to the abstract setting of
vector lattices is of much interest.
Motivated by this, the concept of  {\em unbounded order convergence} or {\em
uo-convergence} 
has recently been intensively studied in several papers
\cite{G:14,GTX:16,GX:14}.
A net $(x_\al)$  is said to {\em unbounded order
converge} ({\em uo-converge}) to $x\in X$ if $|x_\al-x| \wedge
y\xrightarrow{o}0$ for any $y \in X_+$.  In this case, we write
$x_\alpha\xrightarrow{uo}x$.
It is indeed easy to verify that if $X$ is a lattice ideal of  
$L^0(\Omega,\Sigma,\mu)$, then a sequence
$(f_n)$ in $X$ uo-converges to  $f\in X$ if and only if it converges
a.e.\ to $f$.

When $X$ is a Banach lattice, there is a well known duality theory associated
with order convergence.
For instance, a linear functional $\phi$ on $X$ is said to be {\em order
continuous} if
$\phi(x_\al) \to 0$ for any net $(x_\al)$ in $X$ that order converges to $0$.
The set $\Xn$ of all order continuous linear functionals on $X$ is called the
\emph{order continuous dual} of $X$. It is  a band
(i.e., order
closed lattice ideal) in $X^*$, and $X^* = \Xn$ if and only if
$X$ is \emph{order continuous}, i.e., $\|x_\al\|\to 0$ for any net
$(x_\al)$ in $X$ that order converges to $0$ (\cite[Theorem~2.4.2]{MN:91}).

One of the aims of the present paper is to develop a duality theory for
uo-convergence. Besides its intrinsic interest, a strong motivation derives
from the  representation theory of risk measures and convex functionals in
Mathematical Finance.  The following result was obtained by Gao and Xanthos in
\cite{GX:16a}.

\begin{theorem}[{\cite[Theorem~2.4]{GX:16a}}]\label{rep-orl-gx}
If an Orlicz space $L^\Phi$ is not equal to $L^1$, then the following
statements are equivalent for every proper (i.e., not identically $\infty$)
convex functional
$\rho:L^\Phi\rightarrow(-\infty,\infty]$.\begin{enumerate}
\item $\rho(f)=\sup_{g\in H^\Psi}\big(\int  fg-\rho^*(g) \big)$ for any
$f\in
L^\Phi $, where $H^\Psi$ is the conjugate Orlicz heart, and
$\rho^*(g)=\sup_{f\in L^\Phi}\big(\int  fg-\rho(f)\big)$ for any $g\in
H^\Psi$.
\item\label{o-dual-orlic2} $\rho(f)\leq \liminf_n \rho(f_n)$, whenever
$f_n\xrightarrow{a.e.} f$ and $(f_n)$ is norm  bounded in $L^\Phi $.
\end{enumerate}
\end{theorem}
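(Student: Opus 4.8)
The plan is to read Theorem~\ref{rep-orl-gx} as an instance of the Fenchel--Moreau biconjugation theorem for the dual pair $\langle L^\Phi, H^\Psi\rangle$. Since the conjugate Orlicz heart $H^\Psi$ has order continuous norm, its Banach dual coincides with its K\"othe dual, which is $L^\Phi$; hence $L^\Phi=(H^\Psi)^*$ and $\sigma(L^\Phi,H^\Psi)$ is precisely the weak$^*$ topology of $L^\Phi$. With $\rho^*$ as in the statement (the conjugate of $\rho$ computed over $H^\Psi$) and $\rho^{**}(f)=\sup_{g\in H^\Psi}\bigl(\int fg-\rho^*(g)\bigr)$, statement~(1) says exactly that $\rho=\rho^{**}$. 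By the biconjugation theorem for dual pairs, a proper $\rho\colon L^\Phi\to(-\infty,\infty]$ satisfies $\rho=\rho^{**}$ if and only if $\rho$ is convex and $\sigma(L^\Phi,H^\Psi)$-lower semicontinuous; properness of $\rho^*$ is then automatic, by Hahn--Banach separation of a point off one of the (weak$^*$-closed, convex) sublevel sets of $\rho$. So the theorem reduces to the equivalence, for convex proper $\rho$, of condition~(2) with $\sigma(L^\Phi,H^\Psi)$-lower semicontinuity.

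For the direction (1)$\Rightarrow$(2) I would argue directly from the representation. Fix $g\in H^\Psi$ and a norm-bounded sequence $(f_n)$ in $L^\Phi$ with $f_n\xrightarrow{a.e.}f$, and put $M=\sup_n\|f_n\|_\Phi$. Since $g$ has order continuous norm in $L^\Psi$, the H\"older estimate $\int_A|f_n g|\le 2M\,\|g\one_A\|_\Psi$ shows that $\int_A|f_n g|$ is small uniformly in $n$ when $\mu(A)$ is small (and, by $\sigma$-finiteness, when $A$ lies outside a large set of finite measure); combined with Egorov's theorem this gives $\int f_n g\to\int fg$. Thus each affine functional $f\mapsto\int fg-\rho^*(g)$ is continuous along norm-bounded a.e.-convergent sequences, and a pointwise supremum of such functionals is lower semicontinuous along them. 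Hence~(2).

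The substantive direction is (2)$\Rightarrow$(1). Here one must show that each sublevel set $C_c=\{f\in L^\Phi:\rho(f)\le c\}$ is weak$^*$ closed in $L^\Phi=(H^\Psi)^*$. By the Krein--\v{S}mulian theorem it suffices to verify that $C_c\cap mB_{L^\Phi}$ is weak$^*$ closed for every $m\in\N$, i.e.\ to work inside norm-bounded sets. On such sets the point is to reduce weak$^*$ closedness to closedness under a.e.-convergent bounded sequences: passing to convex combinations (this is where convexity of $C_c$ is used) and exploiting that $L^\Phi\ne L^1$, which forces norm-bounded sets to behave uniformly integrably against $H^\Psi$, one replaces a weak$^*$-convergent bounded net by a sequence whose a.e.\ limit agrees with the weak$^*$ limit — a Koml\'{o}s/subsequence-splitting type argument. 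Condition~(2) then puts the limit into $C_c$, so $C_c\cap mB_{L^\Phi}$ is weak$^*$ closed, $\rho$ is $\sigma(L^\Phi,H^\Psi)$-lower semicontinuous, and the biconjugation theorem delivers~(1).

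The main obstacle is exactly this bounded-set step: upgrading the purely sequential Fatou-type hypothesis~(2) to genuine weak$^*$ lower semicontinuity. Krein--\v{S}mulian cleanly handles the passage from norm-bounded sets to all of $L^\Phi$, but the heart of the matter is showing that, on norm-bounded convex subsets of $L^\Phi$, the $\sigma(L^\Phi,H^\Psi)$-closure is detected by a.e.-convergent sequences. This is precisely where the hypothesis $L^\Phi\ne L^1$ is indispensable: in $L^1$ the conjugate heart degenerates and a.e.-bounded convergence fails to control the relevant topology, so the argument would collapse. One should expect to spend most of the effort on a careful subsequence-splitting (or biting-lemma) argument adapted to the Orlicz setting, or to isolate and cite the appropriate Dunford--Pettis-type compactness for $L^\Phi$; it is exactly this ad hoc step that the general uo-duality developed in the remainder of the paper is designed to replace and clarify.
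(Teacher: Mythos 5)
Your overall architecture — Fenchel--Moreau biconjugation for the pair $\langle L^\Phi,H^\Psi\rangle$, the identification $L^\Phi=(H^\Psi)^*$ (which is where $L^\Phi\neq L^1$ really enters, since it makes $\Psi$ an Orlicz function and $H^\Psi$ a nondegenerate order continuous predual), and the Krein--Smulian localization to the sets $C_c\cap mB_{L^\Phi}$ — is the same skeleton the paper uses when it deduces Theorem~\ref{rep-orl-gx} from Theorem~\ref{general-rep} (i.e.\ from Proposition~\ref{uo-rep} and Corollary~\ref{uo-closed-all}), and your (1)$\Rightarrow$(2) argument is fine. But the step you yourself flag as the ``main obstacle'' is a genuine gap, not a routine verification: you must show that a point $f$ in the $\sigma(L^\Phi,H^\Psi)$-closure of the bounded convex set $C_c\cap mB_{L^\Phi}$ is the a.e.\ limit of a norm bounded sequence from that set, and the tools you point to do not do this as stated. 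A Koml\'os/subsequence-splitting or biting-lemma argument needs a \emph{sequence} to start from, whereas $\sigma(L^\Phi,H^\Psi)$ is a weak$^*$ topology whose restriction to bounded sets need not be metrizable (unless $H^\Psi$ is separable), so no sequence can in general be extracted from the approximating net; and Dunford--Pettis-type compactness produces weakly convergent subnets, not a.e.\ convergent sequences. So the hard direction (2)$\Rightarrow$(1) is left unproved.

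The missing ingredient is exactly Theorem~\ref{uo-exist} of the paper, and it is more elementary than what you envisage. By Kaplan's theorem the absolute weak topology $\abs{\sigma}(L^\Phi,H^\Psi)$ is consistent with the duality, so by Mazur's theorem the $\sigma(L^\Phi,H^\Psi)$- and $\abs{\sigma}(L^\Phi,H^\Psi)$-closures of a convex set coincide. Choosing a strictly positive $g_0\in H^\Psi$ (over a probability space one can take $g_0=\one$, which lies in $H^\Psi$ precisely because $\Psi$ is finite-valued when $L^\Phi\neq L^1$), one gets a net $(f_\alpha)$ in $C_c\cap mB_{L^\Phi}$ with $\int\abs{f_\alpha-f}\,g_0\,\mathrm{d}\mathbb{P}\to0$, hence a sequence $(f_n)$ in $C_c\cap mB_{L^\Phi}$ with $\int\abs{f_n-f}\,g_0\,\mathrm{d}\mathbb{P}\leq 2^{-n}$, and then $f_n\xrightarrow{a.e.}f$; condition (2) (plus the Fatou property of the Luxemburg norm) puts $f$ back into $C_c\cap mB_{L^\Phi}$, and Krein--Smulian finishes the argument. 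Convexity is used only through Mazur; no Koml\'os theorem, biting lemma, or weak compactness is needed. Relatedly, your closing claim that $L^\Phi\neq L^1$ is what makes bounded sets ``behave uniformly integrably'' in this closure step misplaces the role of that hypothesis: it is needed to set up the dual pair and the weak$^*$ framework in the first place, while the closure step runs on the strictly positive order continuous functional $g_0$ alone.
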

\noindent Condition (2)  in Theorem~\ref{rep-orl-gx} suggests the following
definition.
Let $X$ be a Banach lattice.  A linear functional $\phi$ on $X$ is said to be
{\em boundedly uo-continuous} if 
$\phi(x_\al) \to 0$ whenever $x_\al\xrightarrow{uo} 0$ and $(x_\al)$ is {\em 
norm bounded}.
The set of all boundedly uo-continuous functionals on $X$ is called the {\em
uo-dual} of
$X$ and is denoted by 
$\Xu$.

The definition of boundedly uo-continuous functionals is closely connected to
that of order continuous functionals.
Recall that a net $(x_\alpha)$ in a vector lattice $X$ is said to be \emph{order
bounded} if there exists $x\in X_+$ such that $\abs{x_\alpha}\leq x$ for all
$\alpha$.
For any net $(x_\alpha)$ in $X$, it is easily seen that
$x_\alpha\xrightarrow{o}0$ if and only if $x_\alpha\xrightarrow{uo}0$ and a tail
of $(x_\alpha)$ is order bounded. 
Thus a linear functional $\phi$ on $X$ is order
continuous if and only if
$\phi(x_\alpha)\rightarrow 0$ whenever $x_\alpha\xrightarrow{uo}0$ and
$(x_\alpha)$ is \emph{order bounded}.

\medskip

In \S 2, we show  that $\Xu$ is precisely the order continuous
part of $\Xn$.  As a consequence, it is deduced that any Banach
lattice can have at most one order continuous predual up to lattice isomorphism,
namely, $\Xu$.
In \S 3, a characterization is given of precisely when $X = (\Xu)^*$.
In the final section, we apply these results to prove a theorem
(Theorem~\ref{general-rep}) on
 dual representation of convex functionals in the general setting of Banach
lattices
with order continuous preduals.  
Theorem~\ref{general-rep} generalizes Theorem~\ref{rep-orl-gx} to the abstract
setting.

\medskip

We adopt \cite{AB:78,AB:06} as standard references for unexplained terminology
and
facts on vector and Banach lattices. 
We will frequently use the following fact.
Recall that a sublattice $Y$ of a vector lattice $X$ is said to be
\emph{regular} if
$y_\alpha\downarrow
0$ in 
$Y$ implies $y_\alpha\downarrow 0$ in $X$.
In this case, by \cite[Theorem~3.2]{GTX:16}, for any net $(y_\alpha)$ in $Y$,
$$y_\alpha\xrightarrow{uo}0\mbox{ in }Y\quad
\iff\quad y_\alpha\xrightarrow{uo}0\mbox{ in }
X.$$
Ideals and order dense sublattices are regular. 
Order continuous norm closed sublattices of a Banach lattice are also regular.

\section{Characterization of the uo-dual}\label{uo-cont-dual}

\begin{definition}\label{def-uo-dual}
Let $X$ be a Banach lattice. A linear functional $\phi$ on $X$ is said to be
\emph{boundedly uo-continuous} if $\phi(x_\alpha)\rightarrow0$ for any
 norm bounded uo-null net $(x_\alpha)$ in $X$. The set of all boundedly
uo-continuous linear functionals on $X$ will be
called the \emph{unbounded order dual}, or \emph{uo-dual}, of $X$, and will be
denoted by
$X_{uo}^\sim$.
\end{definition}

The following proposition explains why the uo-dual is taken to be
the set of all {\em boundedly} uo-continuous functionals rather than just the
uo-continuous functionals.
Recall first that a vector $x> 0$ in an Archimedean vector lattice $X$ is an
{\em atom} if  
for any $u,v\in  [0,x ]$ with $u\wedge v=0$, either
$u=0$ or $v=0$.
In this case, the band generated by $x$ is one-dimensional, namely,
$\span\{x\}$. 
Moreover, the band projection $P$  from $X$ onto $\span\{x\}$ defined
by 
\[ Pz = \sup_n (z^+\wedge nx) - \sup_n(z^-\wedge nx)\]
exists, and there is a unique positive linear functional $\phi$ on $X$ such that
$Pz = \phi(z)x$ for
all $z\in X$.
We call $\phi$ the {\em coordinate functional} of the atom $x$.
Clearly, the span of any finite set of atoms is also a
projection band.
For any $\phi\in \Xn$, its \emph{null ideal} and \emph{carrier}  are the bands
of $X$
defined by $$N_\phi:=\{x\in
X:\abs{\phi}(\abs{x})=0\big\}\quad \mbox{ and }\quad C_\phi:=N_\phi^{\rm d},$$
respectively. Note that $\abs{\phi}$ acts as a strictly positive functional on
$C_\phi$: if $0< x\in
C_\phi$, then $\abs{\phi}(x) > 0$.

\begin{proposition}
Let $\phi$ be  a nonzero linear functional on an Archimedean vector lattice $X$
such that
$\phi(x_\alpha)\rightarrow 0$ whenever $x_\alpha\xrightarrow{uo}0$. Then $\phi $
is  a linear combination of the coordinate functionals of finitely many atoms.
\end{proposition}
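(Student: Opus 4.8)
The plan is to reduce to a positive functional, split off its finite--dimensional atomic part, and reduce what remains to a vanishing statement about atomless lattices. First, since every order--null net is uo--null, $\phi$ is order continuous, and an order continuous functional on an Archimedean vector lattice is automatically order bounded: if $\phi$ were unbounded on some $[0,x]$, pick $y_n\in[0,x]$ with $\abs{\phi(y_n)}\ge 4^n$ and note that $z_n:=2^{-n}y_n\xrightarrow{o}0$ (it is dominated by $2^{-m}x\downarrow 0$ for $n\ge m$) while $\abs{\phi(z_n)}\ge 2^n$. Hence $\abs\phi\in X^\sim$ exists; using that uo--null nets are solid and the Riesz--Kantorovich formula $\abs\phi(x)=\sup\{\phi(z):\abs z\le x\}$, one checks that $\abs\phi$ is again uo--continuous. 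Since the coordinate functionals are positive and $\phi$ will be recoverable once the statement is known for $\abs\phi$ (see below), it suffices to treat $\psi:=\abs\phi$; so we may assume $\phi=\psi\ge 0$, $\phi\ne 0$.

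Next, the atomic part. Non--proportional atoms are disjoint, and a rescaled disjoint sequence is again disjoint, hence uo--null (see, e.g., \cite{GTX:16}); since $\psi$ annihilates uo--null sequences, only finitely many pairwise non--proportional atoms $a_1,\dots,a_k$ can have $\psi(a_i)>0$, and we take this family maximal. Put $B:=\span\{a_1,\dots,a_k\}$, a projection band, so $X=B\oplus B^{\mathrm d}$; if $\phi_i$ is the coordinate functional of $a_i$ then $\phi_i(a_j)=\delta_{ij}$ and each $\phi_i$ vanishes on $B^{\mathrm d}\subseteq\{a_i\}^{\mathrm d}$. Thus $\phi-\sum_{i=1}^k\phi(a_i)\phi_i$ vanishes on $B$ and coincides with $\phi$ on $B^{\mathrm d}$, so the whole statement follows once one shows that $\phi$ (equivalently $\psi$) \emph{vanishes on $B^{\mathrm d}$}. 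Now $B^{\mathrm d}$ is a band, hence a regular sublattice, so uo--convergence in $B^{\mathrm d}$ agrees with uo--convergence in $X$, and $\psi|_{B^{\mathrm d}}$ is a positive uo--continuous functional on the Archimedean vector lattice $B^{\mathrm d}$; moreover, by maximality of $\{a_1,\dots,a_k\}$, every atom of $B^{\mathrm d}$ has $\psi$--value $0$. So the proposition is reduced to the following key claim.

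\emph{Key claim:} a positive uo--continuous functional $\psi$ on an Archimedean vector lattice $Z$ that vanishes on every atom of $Z$ is identically $0$. The idea: if $\psi(z_0)=c>0$ for some $z_0>0$, then $z_0$ is not an atom, and one wants to build a uo--null sequence $(w_n)$ in $Z_+$ with $\psi(w_n)\ge 1$ for all $n$ — impossible by uo--continuity. To do this one \emph{concentrates}: produce elements $0<e_n\le z_0$, decreasing to $0$, with $\psi(e_n)>0$ and with the bands they generate shrinking to $\{0\}$, and set $w_n:=\lceil 1/\psi(e_n)\rceil\,e_n$, so that $\psi(w_n)\ge 1$; each $w_n\wedge y$ lies in the band generated by $e_n$ and satisfies $w_n\wedge y\le y$, so $(w_n\wedge y)_n$ is dominated by a decreasing sequence with infimum $0$, whence $w_n\xrightarrow{uo}0$ — the desired contradiction.

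I expect the main obstacle to be exactly this key claim: carrying out the concentration — locating, inside the carrier of $\psi$, a decreasing sequence with strictly positive $\psi$--values but vanishingly small generated bands, and verifying that the rescaled sequence is genuinely uo--null — is where atomlessness is really used (compare how $\delta_{x_0}$ on $C[0,1]$, which vacuously vanishes on atoms, already fails to be order continuous and so is excluded here), and it requires care with bands and projections since $Z$ need not be Dedekind complete. All the remaining ingredients — order boundedness of order continuous functionals, the passage from $\phi$ to $\abs\phi$, and the bookkeeping with the projection band $B$ and the coordinate functionals — are routine.
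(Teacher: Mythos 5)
Your reductions are all sound and in fact mirror the paper's opening moves: order continuity and order boundedness of $\phi$, the rescaled-disjoint-sequence trick (disjoint sequences are uo-null, so $\psi$ cannot be positive on infinitely many pairwise disjoint atoms), and the bookkeeping with the projection band generated by finitely many atoms. But the proof stops exactly where the proposition lives: your ``key claim'' --- a positive uo-continuous functional vanishing on all atoms is zero --- is the entire content of the result, and you offer only a sketch that you yourself flag as the main obstacle. The sketch does not close: you never construct the elements $e_n$ with $\psi(e_n)>0$ and ``bands shrinking to $\{0\}$'', and even granting them, the uo-nullity of $w_n=\lceil 1/\psi(e_n)\rceil e_n$ is not justified. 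Knowing that $w_n\wedge y\leq y$ and that $w_n\wedge y$ lies in the band generated by $e_n$ does not produce a decreasing dominating sequence with infimum $0$; for that you would want to dominate $w_n\wedge y$ by the band projection of $y$ onto those shrinking bands, and such projections need not exist since $Z$ is only Archimedean, not Dedekind complete --- precisely the difficulty you acknowledge but do not resolve.

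The paper closes this gap without any ``concentration'' or projection machinery, and the idea is worth internalizing because it is just a second use of the disjointness trick you already employed for atoms. Work on the carrier $C_\phi=N_\phi^{\rm d}$, on which $\abs{\phi}$ is strictly positive. First, $C_\phi$ contains no infinite disjoint sequence of nonzero vectors: given such $(u_n)$, strict positivity gives $\abs{\phi}(\abs{u_n})>0$, Riesz--Kantorovich gives $v_n\in[-\abs{u_n},\abs{u_n}]$ with $\phi(v_n)\neq 0$, and $\bigl(v_n/\phi(v_n)\bigr)$ is disjoint, hence uo-null, yet $\phi$ sends it to the constant $1$ --- contradiction. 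Second, if the band $B$ generated by the (necessarily finitely many) atoms of $C_\phi$ were not all of $C_\phi$, pick $0<x\in C_\phi$ with $x\perp B$; since $x$ is not an atom it splits into two disjoint nonzero parts $u_1\perp y$ with $u_1,y\in C_\phi$, and since $y\perp B$ it is again not an atom, so it splits again, and iterating produces an infinite disjoint sequence in $C_\phi$ --- contradicting the first step. Hence $C_\phi=B$ is generated by finitely many atoms and the conclusion follows. This repeated-splitting argument is the missing piece your key claim needs (applied to the carrier, which is atom-free after your reduction), and it sidesteps the Dedekind-completeness and band-projection issues on which your sketch founders.
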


\begin{proof}
Note first that $\phi$ is order continuous and is thus order bounded. We claim
that
$C_\phi$ cannot contain an infinite disjoint sequence of nonzero vectors.
Suppose otherwise that $(u_n)$ is   an infinite disjoint sequence of nonzero
vectors in $C_\phi$. Then $\abs{\phi}(\abs{u_n})>0$ for every $n\geq 1$.
By Riesz-Kantorovich Formula (\cite[Theorem~1.18]{AB:06}), 
there exists $v_n\in [-\abs{u_n},\abs{u_n}] $ such that $\phi(v_n)\neq 0$. Since
$\big(\frac{v_n}{\phi(v_n)}\big) $ is disjoint and thus uo-null
(\cite[Corollary~3.6]{GTX:16}),
$1=\phi\big(\frac{v_n}{\phi(v_n)}\big)\rightarrow0$, which is absurd.  
This proves the claim. It follows in particular that $C_\phi$ contains at most
finitely many disjoint atoms.
Let $B$ be the band generated by the atoms of $C_\phi$. It is a projection band
of $C_\phi$. If $B\neq C_\phi$, 
there would exist $0<x\in C_\phi$ such that $x\perp B$. Since $x$ is not an
atom, there exist
$u_1,y$ such that $0<u_1,y\leq
x$
and $u_1\perp y$. Clearly, $u_1,y\in C_\phi$.
Since $y\perp B$, $y$ is not an atom, and thus there exist $u_2,z$ such that
$0<u_2,z\leq
y$
and $u_2\perp z$. Clearly, $u_2,z\in C_\phi$. Repeating this process, we obtain
an infinite disjoint sequence of nonzero vectors in $C_\phi$, which contradicts
the claim.
Thus $C_\phi=B$ is generated by finitely
many
atoms. The desired result follows immediately.
\end{proof}

Since each order convergent net has a tail which is order bounded, and
therefore,
norm bounded, it is easy to see that $X_{uo}^\sim\subset X_n^\sim$.
The following theorem determines the precise position of $X_{uo}^\sim$ in
$X_n^\sim$. Recall first that the \emph{order continuous part}, $X^a$, of a
Banach lattice $X$ is given by 
$$X^{a}=\big\{x\in X: \mbox{ every disjoint sequence in }[0,\abs{x}]\mbox{ is
norm null}\big\}.$$
It is the largest norm closed ideal of $X$ which is order continuous in its own
right. 

\begin{theorem}\label{uo-dual-cont}
Let $X$ be a Banach lattice.
Then $X_{uo}^\sim$ is the order continuous part of $X_n^\sim$. Specifically, for
any
$\phi\in
X_n^\sim$, the following statements are equivalent:
\begin{enumerate}
\item\label{o-uo-cont1}  $\phi\in X^\sim_{uo}$,
\item\label{o-uo-cont1.5}  $\phi(x_n)\rightarrow 0$ for any norm bounded uo-null
sequence $(x_n)$ in $X$,
\item\label{o-uo-cont2}  $\phi(x_n)\rightarrow 0$ for any norm bounded disjoint
sequence $(x_n)$ in $X$,
\item\label{o-uo-cont3}  every disjoint sequence in
$[0,\abs{\phi}]$ is norm null.
\end{enumerate}
\end{theorem}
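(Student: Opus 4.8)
The plan is to prove a cycle of implications $(1)\Rightarrow(2)\Rightarrow(3)\Rightarrow(4)\Rightarrow(1)$, after which the identification of $X_{uo}^\sim$ with the order continuous part of $X_n^\sim$ is immediate: condition $(4)$ says exactly that $\phi\in(X_n^\sim)^a$, and since $X_n^\sim$ is itself a Banach lattice (a band in $X^*$), its order continuous part is well-defined. The implications $(1)\Rightarrow(2)$ and $(3)\Rightarrow(4)$ are essentially trivial: the former is just restricting from nets to sequences, and for the latter one notes that a disjoint sequence in $[0,\abs\phi]$ is in particular a norm bounded disjoint sequence in $X^\sim_n$, and by applying $(3)$ to the Banach lattice $X_n^\sim$ in place of $X$ — or more directly, by testing $(3)$ against a suitable sequence — one forces norm-nullity; in fact $(3)\Leftrightarrow(4)$ is the standard characterization of $X^a$ applied inside $X_n^\sim$.

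For $(2)\Rightarrow(3)$, I would simply recall (as stated in the excerpt, via \cite[Corollary~3.6]{GTX:16}) that every disjoint sequence in a vector lattice is uo-null; so a norm bounded disjoint sequence is a norm bounded uo-null sequence, and $(2)$ applies directly. The substantive implication, and the one I expect to be the main obstacle, is $(4)\Rightarrow(1)$: assuming every disjoint sequence in $[0,\abs\phi]$ is norm null, I must show $\phi$ is boundedly uo-continuous, i.e. $\phi(x_\alpha)\to0$ for every norm bounded uo-null \emph{net} $(x_\alpha)$. Here the jump from sequences to nets is the crux.

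To handle $(4)\Rightarrow(1)$, my strategy would be to pass to the carrier. Since $\phi\in X_n^\sim$, decompose $X = N_\phi \oplus C_\phi$ (as $N_\phi$ is a projection band because $\phi$ is order continuous and hence $N_\phi$ is a band whose complement carries $\abs\phi$ strictly positively; more carefully, $\phi$ vanishes on $N_\phi$ so only the $C_\phi$-component matters). On $C_\phi$, $\abs\phi$ is strictly positive and condition $(4)$ says that the order interval $[0,\abs x]$ in $C_\phi$ contains no non-norm-null disjoint sequence for any $x$; combined with strict positivity of $\abs\phi$, this should let me show that on $C_\phi$ the functional $\abs\phi$ (equivalently the $\abs\phi$-induced $L^1$-type norm, or the structure of $C_\phi$ as the range) behaves well enough that norm bounded uo-null nets are mapped to $0$. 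Concretely, I would try to show that $C_\phi$ with the norm $x\mapsto\abs\phi(\abs x)$ is order continuous — or invoke that $\phi$ restricted to $C_\phi$ extends to an order continuous functional and use a Grothendieck/disjointification argument: if $\phi(x_\alpha)\not\to0$ for some norm bounded uo-null net, extract along a subnet with $\abs{\phi(x_\alpha)}\ge\varepsilon$, then build a disjoint sequence in an order interval of $C_\phi$ witnessing failure of $(4)$, using the uo-nullity to make the "tails" disjoint and the norm bound to keep them in a fixed order interval after the usual small-perturbation/disjointification lemma. The delicate point is manufacturing an honest disjoint \emph{sequence} (not net) inside a fixed order interval from a uo-null net, which is where I expect to lean hardest on \cite[Theorem~3.2]{GTX:16} (regularity and transfer of uo-convergence to $C_\phi$) and on the disjointification techniques for order continuous functionals; I would also double-check whether one can reduce immediately to the case $X=C_\phi$, i.e. $\phi$ strictly positive and order continuous, which simplifies the bookkeeping considerably.
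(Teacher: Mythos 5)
There is a genuine gap, and it starts with a misreading of condition (4). Condition (4) concerns disjoint sequences of \emph{functionals} in the order interval $[0,\abs{\phi}]$ taken in $X_n^\sim$ (equivalently in $X^*$, since $X_n^\sim$ is an ideal of $X^*$); it says precisely that $\phi\in(X_n^\sim)^a$. It is not the statement that order intervals $[0,\abs{x}]$ in $C_\phi\subset X$ contain no non-norm-null disjoint sequences, which is how you paraphrase it. Consequently your plan for (4)$\Rightarrow$(1) --- ``build a disjoint sequence in an order interval of $C_\phi$ witnessing failure of (4)'' --- targets the wrong object: a disjoint sequence in $X$ could at best contradict (3), not (4). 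Relatedly, you dismiss (3)$\Rightarrow$(4) as essentially trivial (``apply (3) to $X_n^\sim$ in place of $X$,'' or ``the standard characterization of $X^a$ inside $X_n^\sim$''); that does not parse, because (3) is a hypothesis about $\phi$ acting on disjoint sequences in $X$, while (4) is about norms of disjoint functionals below $\abs{\phi}$. The equivalence (3)$\iff$(4) is a genuine duality statement, which the paper obtains from \cite[Theorem~2.3.3]{MN:91} applied with $A=B_X$ and $B=[-\abs{\phi},\abs{\phi}]$. Finally, your reduction to the carrier presumes $X=N_\phi\oplus C_\phi$; in a Banach lattice that is not order complete, $N_\phi$ need not be a projection band (for instance in $X=c$, with $\phi$ a strictly positive $\ell^1$-functional supported on the even coordinates, the null ideal admits no complementary band decomposition), so ``the $C_\phi$-component of $x_\alpha$'' is not available.

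What is missing is the actual mechanism for upgrading the sequential hypothesis to control of arbitrary norm bounded uo-null nets, and this is where the paper's proof does its work: it proves (3)$\Rightarrow$(1) directly. From (3) and the Riesz--Kantorovich formula one gets $\abs{\phi}(\abs{x_n})\to0$ for every disjoint sequence $(x_n)$ in $B_X$; then \cite[Theorem~4.36]{AB:06}, applied to the seminorm $\abs{\phi}(\abs{\cdot})$, the identity operator and $B_X$, gives for each $\varepsilon>0$ a $u\in X_+$ with $\sup_{x\in B_X}\abs{\phi}\big((\abs{x}-u)^+\big)\leq\varepsilon$. For a uo-null net $(x_\alpha)$ in $B_X$ one has $\abs{x_\alpha}\wedge u\xrightarrow{o}0$, so order continuity of $\phi$ yields $\limsup_\alpha\abs{\phi}(\abs{x_\alpha})\leq\varepsilon$, and $\varepsilon$ is arbitrary. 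Your sketch invokes ``disjointification techniques'' but produces neither this uniform-on-the-ball estimate nor any substitute for it; the extraction of an honest disjoint sequence inside a fixed order interval from a uo-null net --- which you yourself flag as the delicate point --- is exactly what is never carried out. As written, the proposal secures only the easy implications (1)$\Rightarrow$(2)$\Rightarrow$(3).
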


Observe  that, since $X_n^\sim$ is an ideal of $X^*$,  the interval
$[0,\abs{\phi}]$ is the same when taken in $X_n^\sim$
or 
in $X^*$ for any $\phi\in
X_n^\sim$.

\begin{proof}
The implications
\eqref{o-uo-cont1}$\implies$\eqref{o-uo-cont1.5}$\implies$\eqref{o-uo-cont2} are
obvious
since every disjoint sequence is uo-null by \cite[Corollary~3.6]{GTX:16}.
Assume now that \eqref{o-uo-cont2} holds. By Riesz-Kantorovich Formula, it
can be easily verified that $\abs{\phi}(\abs{x_n})\rightarrow 0$ for any
disjoint
sequence $(x_n)$ in the closed unit  ball $B_X$. Applying
\cite[Theorem~4.36]{AB:06} to the seminorm
$\abs{\phi}(\abs{\cdot})$, the identity operator and $B_X$, we have that, for
any
$\varepsilon>0$,
there exists $u\in X_+$ such that $$\sup_{x\in
B_X}\abs{\phi}\big(\abs{x}-\abs{x}\wedge u\big)=\sup_{x\in
B_X}\abs{\phi}\big((\abs{x}-u)^+\big)\leq \varepsilon.$$
If $(x_\alpha)$ is a uo-null net in $B_X$, then $\abs{x_\alpha}\wedge
u\xrightarrow{o}0$, so that $\abs{\phi}(\abs{x_\alpha}\wedge u)\rightarrow 0$.
Therefore, $$\limsup_\alpha\abs{\phi}(\abs{x_\alpha})\leq \varepsilon.$$
By arbitrariness of $\varepsilon$, we have $$\abs{\phi(x_\alpha)}\leq
\abs{\phi}(\abs{x_\alpha})\rightarrow0.$$
This proves that $\phi\in X_{uo}^\sim$.
Hence, \eqref{o-uo-cont2}$\implies$\eqref{o-uo-cont1}. 
The equivalence of \eqref{o-uo-cont2} and \eqref{o-uo-cont3} follows from
\cite[Theorem~2.3.3]{MN:91} with $A=B_X$ and $B=[-\abs{\phi},\abs{\phi}]$, where
the order interval is regarded as taken in $X^*$.
\end{proof}

It follows, in particular, that $\Xu$ is a norm closed ideal of $\Xn$ and of
$X^*$ and is an order continuous Banach lattice itself.

\begin{example}
$(\ell^1)_{uo}^\sim=(\ell^\infty)^a=c_0$; $(c_0)^\sim_{uo}=(\ell^1)^a=\ell^1$;
$(\ell^\infty)_{uo}^\sim=(\ell^1)^a=\ell^1$.
\end{example}

\begin{corollary}[{\cite[Theorem 5]{W:77}}]\label{wick}
For a Banach lattice $X$, $X^\sim_{uo}=X^*$ iff $X$ and $X^*$ are both order
continuous iff every norm bounded uo-null net is weakly null.
\end{corollary}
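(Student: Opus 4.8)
The plan is to deduce everything from Theorem~\ref{uo-dual-cont}, which identifies $\Xu$ with the order continuous part $(\Xn)^a$ of $\Xn$, together with the classical fact (\cite[Theorem~2.4.2]{MN:91}, recalled in the introduction) that $X^* = \Xn$ precisely when $X$ is order continuous, and the fact that for a Banach lattice $Y$ one has $Y^a = Y$ if and only if $Y$ is order continuous.

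For the first equivalence I would argue as follows. Since $(\Xn)^a \subseteq \Xn \subseteq X^*$, the equality $\Xu = X^*$ is equivalent to the conjunction of $\Xn = X^*$ and $(\Xn)^a = \Xn$. The first of these says exactly that $X$ is order continuous. Assuming it, $\Xn = X^*$, so the second condition reads $(X^*)^a = X^*$, i.e.\ $X^*$ is order continuous. Conversely, if both $X$ and $X^*$ are order continuous, then $\Xn = X^*$ and hence $X^* = (X^*)^a = (\Xn)^a = \Xu$. This settles ``$\Xu = X^*$ iff $X$ and $X^*$ are both order continuous.''

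The second equivalence is essentially a restatement of the definition of $\Xu$. If every norm bounded uo-null net is weakly null, then every $\phi \in X^*$ is boundedly uo-continuous, so $X^* \subseteq \Xu \subseteq X^*$, whence $\Xu = X^*$. Conversely, if $\Xu = X^*$ and $(x_\alpha)$ is a norm bounded uo-null net, then $\phi(x_\alpha) \to 0$ for every $\phi \in X^* = \Xu$, i.e.\ $x_\alpha \to 0$ weakly.

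I do not anticipate a serious obstacle; the only point requiring a little care is the observation that $\Xu = X^*$ already forces the intermediate equality $\Xn = X^*$ --- one cannot have $(\Xn)^a = X^*$ while $\Xn \subsetneq X^*$, simply because $(\Xn)^a \subseteq \Xn$. Once this is noted, the remainder is bookkeeping with Theorem~\ref{uo-dual-cont} and the order-continuity characterization $Y^a = Y$.
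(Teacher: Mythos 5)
Your proposal is correct and follows essentially the same route as the paper: both reduce the second equivalence to the definition of $\Xu$ and deduce the first from the sandwich $\Xu\subset\Xn\subset X^*$, Theorem~\ref{uo-dual-cont} (i.e.\ $\Xu=(\Xn)^a$), and the fact that $X^*=\Xn$ iff $X$ is order continuous. The only cosmetic difference is that you package the argument as the equivalence of $\Xu=X^*$ with the conjunction $\Xn=X^*$ and $(\Xn)^a=\Xn$, which is exactly the content of the paper's two-line chase.
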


\begin{proof}
$X^\sim_{uo}=X^*$ means precisely that every norm  bounded
uo-null net in $X$ is weakly null. Thus it suffices to prove the first
``iff''.
Assume that $X$ and $X^*$ are both order continuous. Then  $X_n^\sim=X^*$, and
$\Xn$ is
order continuous.
By Theorem~\ref{uo-dual-cont}, $X_{uo}^\sim=X_n^\sim=X^*$.
Conversely, assume that $X^\sim_{uo}=X^*$. In view of $X_{uo}^\sim\subset
X_n^\sim \subset X^*$, it follows that
$X_{uo}^\sim=X^*=X_n^\sim$.
By the second equality, $X$ is order continuous. By the first equality and 
Theorem~\ref{uo-dual-cont} again, $X^*$ is order
continuous.
\end{proof}

Recall that $X$ is identified as a norm closed sublattice of $X^{**}$ by the
evaluation mapping $$j:X\rightarrow X^{**};\;\;j(x)(\phi)=\phi(x),$$
for any $x\in X$ and $\phi\in X^*$.
In fact, $j(x)\in (X^*)_n^\sim$. Indeed, if
$\phi_\alpha\downarrow0$ in $X^*$, then
$j(x)(\phi_\alpha)=\phi_\alpha(x)\rightarrow 0$ by \cite[Theorems~1.18]{AB:06}.
Thus $j(x)$ is order continuous on $X^*$ by \cite[Theorem~1.56]{AB:06}.
We identify $X$ as a norm closed sublattice of
$(X^*)_n^\sim$.

\begin{proposition}\label{dual-uo-cont}
For a Banach lattice $X$, $X\subset (X^*)_{uo}^\sim$ iff $X$ is order
continuous iff $X=(X^*)_{uo}^\sim$.  In particular, a Banach lattice has at most
one order continuous
lattice isomorphic predual, up to lattice isomorphism.
\end{proposition}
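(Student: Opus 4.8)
The plan is to prove the chain of equivalences $X\subset (X^*)_{uo}^\sim \iff X \text{ order continuous} \iff X=(X^*)_{uo}^\sim$, and then deduce uniqueness of the order continuous predual. The anchor is Theorem~\ref{uo-dual-cont} applied to the Banach lattice $X^*$: it tells us $(X^*)_{uo}^\sim = (X^{*\sim}_n)^a$, the order continuous part of the order continuous dual of $X^*$. Recall from the discussion preceding the proposition that $X$ embeds via $j$ as a norm closed sublattice of $(X^*)_n^\sim$, so it makes sense to ask how $j(X)$ sits relative to the order continuous part of $(X^*)_n^\sim$.

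First I would prove $X$ order continuous $\implies X=(X^*)_{uo}^\sim$. If $X$ is order continuous, then $j(X)$ is an ideal of $X^{**}$ (a standard fact: an order continuous Banach lattice is an ideal in its bidual, \cite[Theorem~4.9 or similar]{AB:06}); in particular $j(X)$ is an ideal of $(X^*)_n^\sim$. Since $X$ is order continuous and $j$ is a lattice isometry, $j(X)$ is an order continuous norm closed ideal of $(X^*)_n^\sim$; being the order continuous part is exactly being the \emph{largest} such ideal, so it remains to see $j(X)$ is all of it. For this I would argue that $(X^*)_n^\sim / j(X)$, or rather any element of the order continuous part disjoint from $j(X)$, must vanish: if $0<\Phi\in (X^*)_n^\sim$ with $\Phi\perp j(X)$, then $\Phi$ annihilates the carrier considerations force $\Phi$ to kill every $j(x)$ while remaining order continuous on $X^*$; one then uses that the band generated by $j(X)$ in $(X^*)_n^\sim$ is all of $(X^*)_n^\sim$ because $X$ is $\sigma(X^*,X)$-norming, hence $j(X)$ is order dense in $(X^*)_n^\sim$ — an order dense ideal in an order continuous lattice that is also norm closed is the whole space. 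Thus $j(X)=((X^*)_n^\sim)^a = (X^*)_{uo}^\sim$.

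Next, $X=(X^*)_{uo}^\sim \implies X\subset (X^*)_{uo}^\sim$ is trivial. For $X\subset (X^*)_{uo}^\sim \implies X$ order continuous: by Theorem~\ref{uo-dual-cont}, $(X^*)_{uo}^\sim$ is an order continuous Banach lattice, so $j(X)$, being a sublattice of it, is order continuous; since $j$ is a lattice isomorphism, $X$ is order continuous. This closes the three-way equivalence. For the uniqueness statement: suppose $Y$ is a Banach lattice with $Y^*$ lattice isomorphic to $X$, and $Y$ order continuous. Apply the equivalence just proved to $Y$ in place of $X$: $Y$ order continuous gives $Y=(Y^*)_{uo}^\sim$. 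Under the lattice isomorphism $Y^*\cong X$ this reads $Y\cong X_{uo}^\sim = \Xu$. Hence any order continuous predual is lattice isomorphic to $\Xu$, which proves uniqueness.

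The main obstacle I anticipate is the density/largeness argument in the first implication — showing that the image $j(X)$, which is automatically an order continuous norm closed ideal of $(X^*)_n^\sim$, actually exhausts the order continuous part. The clean way is: $(X^*)_{uo}^\sim$ is an order continuous band (ideal) in $(X^*)_n^\sim$ containing $j(X)$; it is order continuous, hence by Theorem~\ref{uo-dual-cont} applied to it, every norm bounded uo-null net in $(X^*)_{uo}^\sim$ is weakly null (Corollary~\ref{wick}); then one shows that if $j(X)$ were a proper ideal, a disjoint complement element would provide a norm bounded disjoint — hence uo-null — sequence on which some functional in $((X^*)_{uo}^\sim)^*$ does not vanish, contradicting that $(X^*)_{uo}^\sim$ sits inside $X^{**}$ whose predual $X^*$ separates points in a way compatible with $j(X)$ being norming. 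Making this last link precise — that $j(X)$ is $\sigma((X^*)_n^\sim, X^*)$-dense and norm closed and an ideal, therefore everything in the order continuous part — is the technical heart; once it is in hand the rest is bookkeeping.
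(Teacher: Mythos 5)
Your proposal is correct and follows essentially the paper's own route: you show $j(X)$ lands inside the order continuous part $(X^*)_{uo}^\sim$ of $(X^*)_n^\sim$ (via the ideal-in-bidual fact), that it is order dense there (your carrier/band argument is the same fact the paper gets from \cite[Theorem~1.70]{AB:06}), and then norm closedness together with order continuity of $(X^*)_{uo}^\sim$ --- note the density-plus-norm-closedness principle must be applied inside $(X^*)_{uo}^\sim$, not in $(X^*)_n^\sim$, which is rarely order continuous --- forces $j(X)=(X^*)_{uo}^\sim$; the converse implication (norm closed sublattice of an order continuous lattice) and the uniqueness argument coincide with the paper's. The only part to discard is the alternative ``clean way'' sketched in your final paragraph: Corollary~\ref{wick} and a single element disjoint from $j(X)$ do not yield the disjoint sequence or the contradiction you describe, but this is unnecessary since your main-body argument already does the job.
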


\begin{proof}
It is well-known (and easy to verify) that a norm closed sublattice of an order
continuous Banach lattice is order continuous in its own right. 
Thus by Theorem~\ref{uo-dual-cont}, if $X\subset (X^*)_{uo}^\sim$, then $X$
is
order continuous.
Suppose now that $X$ is order continuous. 
It follows from $X\subset (X^*)_n^\sim$ that $X$ is contained in the order
continuous part,
$(X^*)_{uo}^\sim$, of $(X^*)_n^\sim$. 
Moreover, by \cite[Theorem~1.70]{AB:06}, $X$ is order dense in
$(X_n^\sim)_n^\sim=(X^*)_n^\sim$, 
and therefore, in $(X^*)_{uo}^\sim$. 
This, together with order continuity of $(X^*)_{uo}^\sim$, implies that $X$ is
also
norm dense in
$(X^*)_{uo}^\sim$.
Therefore, since $X$ is norm closed in $(X^*)_{uo}^\sim$, 
$X=(X^*)_{uo}^\sim$.

For the last assertion, simply note that if $X$ is order continuous and $X^*$ is
 lattice isomorphic to $Y$, then  $X =(X^*)_{uo}^\sim$ is lattice
isomorphic to  $Y^\sim_{uo}$.
\end{proof}

\section{Banach lattices with order continuous preduals}\label{predual}

In this section, we characterize the Banach lattices which have order
continuous preduals.
We begin with the following proposition 
which is of independent interest and generalizes \cite[Theorem~4.7]{GX:14}.
Recall first that a Banach lattice is said to be
\emph{monotonically complete} if every norm bounded positive  increasing net has
a
supremum.
A net $(x_\al)$ in a vector lattice $X$ is said to be {\em order Cauchy},
respectively, {\em uo-Cauchy} if the double net $(x_\al - x_\beta)_{\al,\beta}$
order converges to $0$, respectively, uo-converges to $0$ in $X$.
Following \cite{GTX:16}, we say that a Banach lattice is \emph{boundedly
uo-complete} if
every norm
bounded uo-Cauchy net is uo-convergent.

\begin{proposition}\label{buoc-mc}A monotonically complete Banach lattice $X$ is
boundedly
uo-complete. The converse is true if $X_n^\sim$ separates points of $X$.
\end{proposition}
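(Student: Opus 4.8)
The plan is to prove the two assertions separately. For the first assertion, suppose $X$ is monotonically complete and let $(x_\alpha)$ be a norm bounded uo-Cauchy net in $X$; we must produce a uo-limit. The standard device is to pass from the net to its ``order lim sup/lim inf'' construction. Fix any $y\in X_+$ and consider the truncated net $z_\alpha = \abs{x_\alpha}\wedge y$, which lies in the order interval $[0,y]$; more usefully, consider for the double-indexed tails the vectors $u_{\alpha_0} = \sup_{\alpha,\beta\geq\alpha_0}\bigl(\abs{x_\alpha-x_\beta}\wedge y\bigr)$. The point of monotone completeness is precisely that these suprema exist: the family inside is norm bounded (by $\norm{y}$, or by twice the norm bound on the net) and, being a supremum of truncations, sits in the band generated by $y$. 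As $\alpha_0$ increases, $u_{\alpha_0}$ decreases; I claim $u_{\alpha_0}\downarrow 0$, which is exactly the statement that $(x_\alpha)$ is order Cauchy after truncation by $y$, i.e., uo-Cauchy. Then one shows the net $(x_\alpha)$ (suitably truncated, or via a fixed cofinal subnet) is uo-convergent: define $x$ on each band $B_y$ generated by $y\in X_+$ using monotone completeness again to extract $\sup$ and $\inf$ of tails, check these agree, and glue. The cleanest route may be to cite or adapt the characterization that a Banach lattice is boundedly uo-complete iff it is ``boundedly order complete in the uo sense'', reducing everything to the existence of suprema of the norm bounded increasing nets $\bigl(\sup_{\beta\leq\alpha}(\abs{x_\beta}\wedge y)\bigr)_\alpha$ — which monotone completeness grants directly.

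For the converse, assume $X$ is boundedly uo-complete and $X_n^\sim$ separates points of $X$; we must show every norm bounded positive increasing net $(x_\alpha)$ has a supremum. Such a net is order Cauchy in the ``increasing'' sense: for $\alpha\leq\beta$ we have $0\leq x_\beta - x_\alpha$, and norm boundedness forces the double net $(x_\beta - x_\alpha)$ to be uo-Cauchy — here is where a small argument is needed, using that an increasing norm bounded net in a Banach lattice is uo-Cauchy (one can see this by noting that for each $\phi\in (X_n^\sim)_+$, the increasing bounded scalar net $\phi(x_\alpha)$ converges, and then invoking separation to upgrade to uo-Cauchy, or more directly by a disjointification/Kadec–Pełczyński type argument on tails). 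By bounded uo-completeness, $x_\alpha\xrightarrow{uo} x$ for some $x\in X$. It remains to show $x = \sup x_\alpha$. Since $X_n^\sim$ separates points, it suffices to test against $\phi\in (X_n^\sim)_+$: on one hand $\phi(x_\alpha)\uparrow$, and uo-convergence together with $\phi\in X_n^\sim = $ (order continuous functionals) — wait, one only has that order bounded uo-null nets are killed by $\phi$, so one truncates: $\abs{x - x_\alpha}\wedge y\xrightarrow{o}0$ gives $\phi\bigl(\abs{x-x_\alpha}\wedge y\bigr)\to 0$ for all $y$, and choosing $y = x$ (or $y$ large) recovers $\phi(x_\alpha)\to\phi(x)$. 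Monotonicity of $\phi(x_\alpha)$ then forces $\phi(x_\alpha)\leq\phi(x)$ for all $\alpha$, so $x$ is an upper bound in the order determined by $(X_n^\sim)_+$; and if $w$ is any upper bound, the same testing gives $\phi(x)=\lim\phi(x_\alpha)\leq\phi(w)$ for all $\phi\in(X_n^\sim)_+$, whence $x\leq w$ by separation. Thus $x=\sup x_\alpha$.

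I would present these as two short paragraphs inside the proof, citing \cite[Theorem~3.2]{GTX:16} and the regularity remarks for the passage between uo in $X$ and in sublattices/bands, and \cite[Theorems~1.18, 1.56]{AB:06} for the order continuity manipulations. The main obstacle I anticipate is the forward direction's technical core: showing that a norm bounded uo-Cauchy net actually has a uo-limit, not merely that the truncated tails are order Cauchy. The subtlety is that uo-Cauchy is a statement about the double net while uo-convergence requires exhibiting a single vector $x$; monotone completeness must be used to manufacture $x$ band by band (as $\inf$ of the decreasing norm bounded nets $\sup_{\beta\geq\alpha}(\,\cdot\,\wedge y)$) and then one must verify consistency of these band-wise limits and that the resulting $x$ genuinely lies in $X$ — a point where one leans on $X$ being an ideal in its universal completion together with the norm bound. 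If the net is only uo-Cauchy (not order Cauchy), some care is needed because the ``$\sup$ of a tail'' may fail to exist in $X$ globally even though each truncation by $y$ behaves; restricting attention to the principal band $B_y$ for each $y$ and using that monotone completeness passes to projection bands should resolve this. For the converse, the only delicate point is justifying that a norm bounded increasing net is uo-Cauchy without already assuming the supremum exists; the separation hypothesis on $X_n^\sim$ is exactly what makes this — and the final identification of $x$ as the least upper bound — go through cleanly.
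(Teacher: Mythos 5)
Your outline identifies the right objects but leaves both technical cores unproved. In the forward direction, after observing that the truncated tails are order Cauchy, you still have to manufacture a single vector $u\in X$ and verify $x_\alpha\xrightarrow{uo}u$; your plan (``define $x$ on each band $B_y$ \dots check these agree, and glue'') is precisely the step that cannot be waved through, and you yourself flag it as unresolved. The paper's construction is: reduce to $x_\alpha\geq 0$; for each $y\in X_+$ the net $(x_\alpha\wedge y)$ is order Cauchy, hence order converges to some $u_y$ (order completeness follows from monotone completeness); the family $(u_y)_{y\in X_+}$ is upward directed, and --- this is the decisive point missing from your sketch --- it is \emph{norm bounded}, because monotone completeness yields a constant $C$ with $\norm{x}\leq C\sup_\alpha\norm{x_\alpha}$ whenever $x_\alpha\xrightarrow{o}x$ (\cite[Proposition~2.4.19(i)]{MN:91}); monotone completeness then gives $u=\sup_y u_y\in X$, and the estimate $\abs{x_\beta\wedge z-x_{\beta'}\wedge z}\wedge y\leq \sup_{\beta\geq\alpha,\,\beta'\geq\alpha'}\abs{x_\beta-x_{\beta'}}\wedge y$, after taking order limits in $\beta'$ and supremum over $z$, yields $\abs{x_\beta-u}\wedge y\leq x_{\alpha,\alpha'}\downarrow 0$, i.e.\ $x_\alpha\xrightarrow{uo}u$. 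Nothing in your proposal (neither ``monotone completeness passes to projection bands'' nor ``ideal in the universal completion'') supplies the uniform norm bound that makes the supremum of the directed family exist, so the forward direction is not established.

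In the converse, your argument hinges on the claim that a norm bounded increasing positive net is uo-Cauchy, and neither of your suggested justifications is a proof: convergence of the scalar nets $\phi(x_\alpha)$ for each $\phi\in (X_n^\sim)_+$ does not ``upgrade by separation'' to uo-Cauchyness, and the ``disjointification/Kadec--Pe{\l}czy\'nski'' remark is unspecified. This is exactly where the separation hypothesis does its work in the paper: since $X_n^\sim$ separates points, the evaluation map $j:X\to(X_n^\sim)_n^\sim$ is a lattice isomorphism onto an order dense, hence regular, sublattice (\cite[Theorem~1.70]{AB:06}); $(X_n^\sim)_n^\sim$ is monotonically complete (\cite[Proposition~2.4.19]{MN:91}), so $j(x_\alpha)\uparrow x^{**}$ there, whence $(j(x_\alpha))$ is order Cauchy, hence uo-Cauchy, and uo-Cauchyness passes back to $j(X)\cong X$ by \cite[Theorem~3.2]{GTX:16}. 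Without some such device your key step is unsupported. A minor further point: once $x_\alpha\xrightarrow{uo}x$ with $(x_\alpha)$ increasing, the identification $x_\alpha\uparrow x$ is purely order-theoretic (for $\beta\geq\alpha_0$ one has $(x_{\alpha_0}-x)^+\leq\abs{x_\beta-x}\wedge(x_{\alpha_0}-x)^+\xrightarrow{o}0$, so $x_{\alpha_0}\leq x$, and similarly $x\leq w$ for any upper bound $w$); your detour through truncations and $\phi(x_\alpha)\to\phi(x)$ is both unnecessary and, as written, gappy, since $\phi(\abs{x-x_\alpha}\wedge x)\to0$ alone does not give $\phi(x_\alpha)\to\phi(x)$ before order boundedness of the net is known.
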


\begin{proof}
Let $X$ be a monotonically complete Banach lattice.
Recall from \cite[Proposition~2.4.19(i)]{MN:91} that $X$ is order complete and
admits a constant $C$ such that
$$\norm{x}\leq
C\sup_\alpha\norm{x_\alpha}\;\;\mbox{ whenever } 0\leq x_\alpha\uparrow x\mbox{
in }X.$$ 
Thus if $x_\alpha\xrightarrow{o}x$
in $X$, 
then $y_\alpha:=\inf_{\beta\geq \alpha} \abs{x_\beta}\uparrow \abs{x}$,
and hence $$\norm{x}\leq C\sup_\alpha\norm{y_\alpha}\leq
C\sup_\alpha\norm{x_\alpha}.$$
Now let $(x_\alpha)$ be any norm bounded uo-Cauchy net in $X$. By considering
the positive and negative parts, respectively, we may assume that $x_\alpha\geq
0$ for each $\alpha$.
For each $y\in X_+$, since $\abs{x_\alpha\wedge y-x_{\alpha'}\wedge y}\leq
\abs{x_\alpha-x_{\alpha'}}\wedge y$, the net $(x_\alpha \wedge y)$ is order
Cauchy and hence order converges to some $u_y \in X_+$.
The net $(u_y)_{y\in X_+}$ is directed upwards, and $$\|u_y\| \leq
C\sup_\alpha\norm{x_\alpha\wedge y}\leq
C\sup_\alpha\|x_\alpha\|$$ for all $y\in X_+$, by the preceding observation.
Since $X$ is monotonically complete, $(u_y)$ increases to an element $u \in X$.
Fix $y \in X_+$.  For any $\alpha, \alpha'$, define 
\[ x_{\alpha,\alpha'} = \sup_{\beta\geq \alpha, \beta'\geq \alpha'}|x_\beta -
x_\beta'| \wedge y.\]
Since $(x_\alpha)$ is uo-Cauchy, $x_{\alpha,\alpha'} \downarrow 0$.
Also, for any $z\in X_+$ and any $\beta \geq \alpha$, $\beta' \geq \alpha'$,
\[ |x_\beta \wedge z - x_{\beta'}\wedge z| \wedge y\leq x_{\alpha,\alpha'} \]
Taking order limit first in $\beta'$ and then supremum over $z$ in $X_+$, we
obtain $\abs{x_\beta -
u}\wedge y \leq x_{\alpha,\alpha'}$ for any $\beta \geq\alpha$.
This implies that $(x_\alpha)$ uo-converges to $u$.

For the second assertion, suppose that $X$ is
boundedly uo-complete and $X_n^\sim$ separates points of $X$. Let $(x_\alpha)$
be a norm
bounded increasing positive net in
$X$. 
Consider the evaluation mapping $$j:X\rightarrow
(X_n^\sim)_n^\sim;\;\;j(x)(\phi)=\phi(x),$$
for any $x\in X$ and $\phi\in \Xn$. 
By \cite[Theorem~1.70]{AB:06}, $j$ is an (into) vector lattice isomorphism, and
$j(X)$ is an order dense, 
in particular regular,
sublattice in $(X_{n}^\sim)_n^\sim$. 
Since $\norm{j}\leq 1$, $(j(x_\alpha))$ is an
increasing norm bounded positive net in
$(X_{n}^\sim)_n^\sim$. By \cite[Proposition~2.4.19]{MN:91}, $(X_n^\sim)^\sim_n$
is monotonically complete, so that there exists $x^{**}\in (X_{n}^\sim)_n^\sim$
such that
$$j(x_\alpha)\uparrow x^{**}\;\;\mbox{ in }\;(X_n^\sim)_n^\sim.$$
In particular,
$(j(x_\alpha))$ is order Cauchy, and therefore uo-Cauchy, in
$(X_{n}^\sim)_n^\sim$. It
follows from \cite[Theorem~3.2]{GTX:16} that
$(j(x_\alpha))$ is uo-Cauchy in $j(X)$.
Since $j$ is one-to-one and onto $j(X)$, $(x_\alpha)$ is
uo-Cauchy in $X$. Let $x$ be the uo-limit of $(x_\alpha)$ in $X$. It is easy to
check
that $x_\alpha\uparrow x$ in $X$.  
\end{proof}

We need two technical lemmas in preparation for the main result of
this section.

\begin{lemma}\label{uo-tec-1}
Let $X$ be a Banach lattice such that $X_n^\sim$ separates points of $X$, and
let $I$ be an ideal of $X_n^\sim$. 
Then $I$ is order dense in $X_n^\sim$ iff it separates points of $X$.
\end{lemma}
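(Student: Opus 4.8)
The plan is to prove the two implications separately, working throughout with null ideals and carriers and using only the elementary direction of the Riesz--Kantorovich formula. First suppose $I$ is order dense in $\Xn$, and fix $0\ne x\in X$. The evaluation functional $j(x)$ is nonzero on $\Xn$ (since $\Xn$ separates the points of $X$) and is order continuous on $\Xn$, being the restriction to the band $\Xn$ of $j(x)\in(X^*)_n^\sim$. Hence $\abs{j(x)}$ is a nonzero positive order continuous functional on $\Xn$, so its null ideal $N_{j(x)}$ is a band properly contained in $\Xn$, and therefore its carrier $C_{j(x)}$ is nonzero. Choose $0<\sigma\in C_{j(x)}$. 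By order density there is $\phi\in I$ with $0<\phi\le\sigma$; as $C_{j(x)}$ is an ideal, $\phi\in C_{j(x)}$, whence $\abs{j(x)}(\phi)>0$ because $\abs{j(x)}$ is strictly positive on its carrier. The Riesz--Kantorovich formula, applied in $\Xn$, gives $\abs{j(x)}(\phi)=\sup\{\tau(x):\tau\in\Xn,\ \abs{\tau}\le\phi\}$, so there is $\tau\in\Xn$ with $\abs{\tau}\le\phi$ and $\tau(x)\ne0$; since $I$ is an ideal of $\Xn$ containing $\phi$, we get $\tau\in I$. Thus $\tau$ separates $x$ from $0$, and $I$ separates the points of $X$.

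Conversely, suppose $I$ separates the points of $X$. Since $\Xn$ is Archimedean and $I$ is an ideal, it suffices to show that the disjoint complement $I^{\rm d}$, taken in $\Xn$, is $\{0\}$ (an ideal of an Archimedean Riesz space being order dense precisely when its disjoint complement vanishes). So let $\psi\in I^{\rm d}$, and assume for contradiction that $\psi\ne0$. Replacing $\psi$ by $\abs{\psi}$, which still lies in the band $I^{\rm d}$, we may take $\psi>0$; then $C_\psi\ne\{0\}$, so pick $0<x\in C_\psi$. Since $I$ separates points, some $\phi_0\in I$ has $\phi_0(x)\ne0$, and then $\phi:=\abs{\phi_0}\in I$ satisfies $\phi(x)=\abs{\phi_0}(\abs{x})\ge\abs{\phi_0(x)}>0$, so $x\notin N_\phi$. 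On the other hand $\phi\wedge\psi=0$, and disjoint positive order continuous functionals have disjoint carriers, so $C_\psi\subseteq C_\phi^{\rm d}=N_\phi$; hence $x\in N_\phi$, a contradiction. Therefore $\psi=0$, $I^{\rm d}=\{0\}$, and $I$ is order dense in $\Xn$.

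The only step that is not routine band-and-null-ideal bookkeeping is the assertion used in the second paragraph that disjoint positive order continuous functionals have disjoint carriers. When $X$ is Dedekind complete this drops out of the existence of the relevant order projections; in general one cites the standard fact (see \cite{AB:06}) or reduces to the Dedekind-complete case by extending $\phi$ and $\psi$ to order continuous functionals on the Dedekind completion of $X$ (in which $X$ sits as an order dense sublattice) and checking that carriers are preserved. That is where I expect the main difficulty to lie; everything else is manipulation of bands, null ideals, carriers, and the inequality $\abs{\tau(z)}\le\abs{\tau}(\abs{z})$.
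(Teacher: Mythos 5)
Your proof is correct and takes essentially the same route as the paper: the nontrivial ("if") direction is exactly the paper's argument — a nonzero $0<\psi\in I^{\rm d}$ would, by the disjointness-of-carriers theorem, have $C_\psi$ contained in $N_\phi$ for every $\phi\in I$, contradicting separation — and your detailed verification of the "only if" direction, which the paper simply calls clear, is fine. The one step you flag as a possible difficulty (disjoint positive order continuous functionals have disjoint carriers) is precisely \cite[Theorem~1.67]{AB:06}, the result the paper cites, and it needs no Dedekind completeness of $X$ (nor any passage to the Dedekind completion), since the infimum $\phi\wedge\psi$ is computed via the Riesz--Kantorovich formula in the always Dedekind complete order dual.
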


\begin{proof}
The ``only if'' part is clear. For the ``if'' part, suppose that $I$ separates
points of $X$ but is not order dense in $X_n^\sim$. 
By \cite[Theorem~1.36]{AB:06}, there exists $0<\phi\in X_n^\sim$ such that
$\phi\perp
I$.
By \cite[Theorem~1.67]{AB:06}, it follows that $C_\phi\subset \cap_{\psi\in
I}N_{\psi}=\{0\}$, where
the last equality holds because $I$ separates points of $X$. 
Thus, $C_\phi=\{0\}$, and $N_\phi=C_\phi^{\rm d}=X$, implying that $\phi=0$, a
contradiction.
\end{proof}

The proof of the following lemma is inspired by that of
\cite[Theorem~1.65]{AB:06}. We provide the details for the convenience of the
reader.

\begin{lemma}\label{uo-tec-2}
If $Y$ is a norm closed order dense ideal of a Banach lattice $Z$, then
$Z_n^\sim$ is lattice isometric to $Y^\sim_n$ via the restriction mapping.
\end{lemma}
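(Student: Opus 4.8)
The plan is to show that the restriction map $R:Z_n^\sim\to Y_n^\sim$, $R\phi=\phi|_Y$, is a surjective lattice isometry. First I would check that $R$ is well defined: the restriction of an order continuous functional on $Z$ to the regular (indeed order dense) sublattice $Y$ is order continuous, since $y_\alpha\downarrow 0$ in $Y$ implies $y_\alpha\downarrow 0$ in $Z$. Next I would verify injectivity: if $\phi|_Y=0$ then $Y\subset N_\phi$, so $C_\phi\subset Y^{\rm d}=\{0\}$ because $Y$ is order dense, hence $N_\phi=Z$ and $\phi=0$.

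The heart of the argument is surjectivity together with norm (and lattice) preservation. Given $\psi\in Y_n^\sim$, I would first reduce to $\psi\geq 0$ by splitting into positive and negative parts. For $z\in Z_+$, I would define $\widetilde\psi(z)=\sup\{\psi(y):y\in Y,\ 0\leq y\leq z\}$; since $Y$ is order dense, the set $\{y\in Y:0\leq y\leq z\}$ is upward directed with supremum $z$, and order continuity of $\psi$ on $Y$ (applied to this net, which increases to $z$ and lies in an order interval of $Z$, hence is order bounded in $Y$) shows the supremum is finite and equals $\lim \psi(y)$ along that net. I would then check $\widetilde\psi$ is additive and positively homogeneous on $Z_+$ and hence extends to a positive linear functional on $Z$ by the standard Kantorovich-type argument, that it restricts to $\psi$ on $Y_+$ (immediate from the definition since $y\leq y$), and that it is order continuous on $Z$: if $z_\alpha\downarrow 0$ in $Z$, I would use order density of $Y$ to sandwich and reduce to order continuity of $\psi$ on $Y$. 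This gives $R\widetilde\psi=\psi$.

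For the isometry, I would note that $R$ is a lattice homomorphism: it is positive, and for $\phi\in Z_n^\sim$ one has $R(\phi^+)=(R\phi)^+$ because by the Riesz–Kantorovich formula $\phi^+(z)=\sup\{\phi(u):0\leq u\leq z\}$, and for $z\in Y_+$ the supremum over $u\in Z$, $0\leq u\leq z$ can be replaced by the supremum over $u\in Y$ using that $Y$ is an ideal (so $0\leq u\leq z\in Y$ forces $u\in Y$); thus $R$ is a lattice homomorphism, and being injective it is a lattice isomorphism onto its range, which by surjectivity is all of $Y_n^\sim$. Finally $\|R\phi\|\leq\|\phi\|$ trivially, and $\|\phi\|=\||\phi|\|=\sup_{z\in B_{Z_+}}|\phi|(z)$; writing $|\phi|(z)=\sup\{|\phi|(y):y\in Y,\ 0\leq y\leq z\}$ as above and using that $\|y\|\leq\|z\|\leq 1$ for such $y$, one gets $\|\phi\|\leq\||\phi|_Y\|=\|R\phi\|$, so $R$ is isometric.

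\textbf{Main obstacle.} The delicate point is the construction of the extension $\widetilde\psi$ and the verification that it is \emph{order continuous} on all of $Z$ (not merely positive and linear): one must pass from order convergence in $Z$ to order convergence in $Y$ through the order-dense embedding, which is exactly where norm closedness of $Y$ and the regularity consequences of order density (together with the earlier-cited facts on regular sublattices) are needed. The additivity of $\widetilde\psi$ on $Z_+$ also requires a small argument showing that the directed families $\{y\in Y:0\leq y\leq z_1\}$ and $\{y\in Y:0\leq y\leq z_2\}$ combine correctly to give $\{y\in Y:0\leq y\leq z_1+z_2\}$ in the relevant limiting sense, which again leans on order density of $Y$ in $Z$.
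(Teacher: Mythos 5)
Your overall architecture (restriction map, injectivity via carriers, lattice-homomorphism property $R(\phi^+)=(R\phi)^+$ from the Riesz--Kantorovich formula and the ideal property, extension of $\psi\geq 0$ by $\widetilde\psi(z)=\sup\{\psi(y):y\in Y,\ 0\le y\le z\}$ plus Kantorovich) is essentially the paper's, and the isometry/lattice-isomorphism part of your plan is sound (indeed a slightly different, equally valid route: the paper instead approximates $z=z^+-z^-$ by $y_\alpha=u_\alpha-v_\alpha$ with $|y_\alpha|\le |z|$, $y_\alpha\xrightarrow{o}z$, and invokes positivity of $R$ and $R^{-1}$). The problems are in the surjectivity step. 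First, your justification of finiteness of $\widetilde\psi(z)$ is wrong: the upward directed set $\{y\in Y:0\le y\le z\}$ is bounded above by $z\in Z$, not by an element of $Y$, so it is in general \emph{not} order bounded in $Y$ and need not order converge in $Y$; order continuity of $\psi$ therefore gives nothing here. The correct (and needed) argument is norm boundedness: $Y$ is norm closed, hence a Banach lattice, so the positive functional $\psi$ is norm bounded and $\psi(y)\le\|\psi\|\,\|y\|\le\|\psi\|\,\|z\|$. This is in fact the only place the hypothesis ``norm closed'' is used, whereas your closing paragraph locates it in the order-continuity step, which is a misdiagnosis.

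Second, and more seriously, the order continuity of $\widetilde\psi$ on $Z$ --- which you yourself identify as the main obstacle --- is not actually proved, and the proposed ``sandwich'' does not work as stated: if $z_\alpha\downarrow 0$ in $Z$ and you pick $y_\alpha\in Y$ with $0\le y_\alpha\le z_\alpha$ and $\psi(y_\alpha)$ close to $\widetilde\psi(z_\alpha)$, the net $(y_\alpha)$ is neither monotone nor dominated by anything in $Y$ (the only available dominating net is $(z_\alpha)\subset Z$), so you cannot ``reduce to order continuity of $\psi$ on $Y$''; attempting to do so is essentially circular. The missing idea, supplied in the paper, is to verify the increasing form instead: for $0\le z_\alpha\uparrow z$ set $D=\{y\in Y_+:\ y\le z_\alpha\ \text{for some}\ \alpha\}$, note $\sup_\alpha\widetilde\psi(z_\alpha)=\sup\psi(D)\le\widetilde\psi(z)$, and for each $y\in Y_+$ with $y\le z$ use that $D\wedge y\uparrow z\wedge y=y$ in $Z$, hence in $Y$ (now the limit lies in $Y$, so order continuity of $\psi$ is legitimately applicable), to get $\psi(y)=\sup\psi(D\wedge y)\le\sup\psi(D)$; taking the supremum over such $y$ gives $\widetilde\psi(z)\le\sup_\alpha\widetilde\psi(z_\alpha)$. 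Without an argument of this kind your proof has a genuine gap at its central step.
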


\begin{proof}
We put the restriction mapping $R:Z_n^\sim\rightarrow Y_n^\sim$ by
$R\phi=\phi_{|Y}$
for any $\phi\in Z_n^\sim$.
Since $Y$ is an ideal of
$Z$, each order null net in $Y$ is also order null in $Z$. Thus 
$R\phi\in Y_n^\sim$ for any $\phi\in Z_n^\sim$, and $R$ indeed maps $Z_n^\sim$
into
$Y_n^\sim$. 
For any $z\in Z$, by order denseness of $Y$, we can take two nets $(u_\alpha) $
and $(v_\alpha)$ in $Y$ such that $0\leq u_\alpha\uparrow z^+$ and $0\leq
v_\alpha\uparrow z^-$ in $Z$. 
Put $y_\alpha=u_\alpha-v_\alpha$. Then $\abs{y_\alpha}\leq \abs{z}$ and
$y_\alpha\xrightarrow{o} z$ in $Z$. It follows that
$$\norm{\phi}=\sup_{z\in
B_Z}\abs{\phi(z)}=\sup_{y\in B_Y}\abs{\phi(y)}=\norm{R\phi}$$
for any $\phi\in Z_n^\sim$, where $B_Z$ and $B_Y$ are the closed unit balls of
$Z$ and $Y$, respectively. 
This proves that $R$ is an isometry.

We now show that $R$ is surjective. Pick any $\psi\in Y_n^\sim$. 
By considering $\psi^\pm$, we may assume that $\psi\geq 0$.
For any $z\in Z_+$, put
$$\phi(z):=\sup\big\{\psi(y):y\in Y,0\leq y\leq z\big\};$$
the supremum is finite because $0\leq \psi(y)\leq \norm{\psi}\norm{y}\leq
\norm{\psi}\norm{z}$.
It is clear that  $ \phi(z_1)+\phi(z_2)\leq \phi(z_1+z_2)$ for any $z_1,z_2\in
Z_+$.
Conversely, by Riesz Decomposition Theorem, it is easy to see that
$\phi(z_1+z_2)\leq \phi(z_1)+\phi(z_2)$.
Thus $$ \phi(z_1)+\phi(z_2)= \phi(z_1+z_2)$$ for any $z_1,z_2\in Z_+$.
By Kantorovich Extension Theorem (\cite[Theorem~1.10]{AB:06}), $\phi$ determines
a linear functional on $Z$
by setting $$\phi(z)=\phi(z^+)-\phi(z^-)$$ for any $z\in Z$.
We show that $\phi\in Z_n^\sim$.
Take any $0\leq z_\alpha\uparrow z$ in $Z$. Put $D=\{y\in Y_+:y\leq
z_\alpha\mbox{ for some }\alpha\}$. 
Then $D\uparrow z$, and $\sup_\alpha\phi(z_\alpha)=\sup \psi(D)\leq \phi(z)$.
For any $y\in Y_+$ with $y\leq z$, $D\wedge y\uparrow z\wedge y=y$ in $Z$ and
thus also in $Y$. Therefore, $\psi(y)=\sup\psi(D\wedge y)\leq \sup\psi(D)$.
Hence, $\phi(z)\leq \sup \psi(D)=\sup_\alpha\phi(z_\alpha)$, and consequently,
$\phi(z)=\sup_\alpha\phi(z_\alpha)$.
It follows that $\phi\in Z_n^\sim$. Clearly, $R\phi=\psi$.

Finally, for any $\phi\in Z_n^\sim$, it is clear that $R\phi\geq 0$ iff
$\phi\geq 0$. Therefore, $R$ is a lattice isomorphism, by
\cite[Theorem~2.15]{AB:06}.
\end{proof}

The next theorem is the main result of this section.  It gives a
characterization of  Banach lattices $X$ that are canonically isomorphic to
$(\Xu)^*$.
In light of Proposition~\ref{dual-uo-cont}, this is equivalent to characterizing
when $X$ has
an order continuous lattice isomorphic predual.

\begin{theorem}\label{predual-cha}Let $X$ be a Banach lattice such that
$X_{uo}^\sim$ separates points of $X$. The following statements are equivalent.
\begin{enumerate}
\item\label{predual-cha1} The mapping $j:X\rightarrow (X_{uo}^\sim)^*$ is a
surjective
lattice isomorphism, where $j(x)(\phi)=\phi(x)$ for any $x\in X$ and any
$\phi\in
X_{uo}^\sim$.
\item\label{predual-cha2} $B_X$ is relatively $\sigma(X,X_{uo}^\sim)$-compact in
$X$.
\item\label{predual-cha3} $X$ is boundedly uo-complete.
\item\label{predual-cha4} $X$ is monotonically complete.
\end{enumerate}
\end{theorem}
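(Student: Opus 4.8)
The plan is to prove the cycle of implications \eqref{predual-cha1}$\implies$\eqref{predual-cha2}$\implies$\eqref{predual-cha3}$\implies$\eqref{predual-cha4}$\implies$\eqref{predual-cha1}, using Proposition~\ref{buoc-mc} to handle \eqref{predual-cha3}$\iff$\eqref{predual-cha4} cheaply (recall $X^\sim_{uo}\subset X_n^\sim$ separates points, so $X_n^\sim$ does too). The implication \eqref{predual-cha1}$\implies$\eqref{predual-cha2} is immediate: if $j$ identifies $X$ with $(X^\sim_{uo})^*$, then $\sigma(X,X^\sim_{uo})$ is the weak-star topology on a dual Banach lattice, so $B_X$ is $\sigma(X,X^\sim_{uo})$-compact by Banach--Alaoglu (here one checks $j$ is also an isometry, which follows since a surjective lattice isomorphism between Banach lattices is automatically a homeomorphism, or one normalizes). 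For \eqref{predual-cha2}$\implies$\eqref{predual-cha3}: take a norm bounded uo-Cauchy net $(x_\alpha)$; after splitting into positive and negative parts and truncating, reduce to showing the net converges. By relative $\sigma(X,X^\sim_{uo})$-compactness, a subnet converges weakly (in the $\sigma(X,X^\sim_{uo})$ sense) to some $x\in X$. Now I would use the same device as in the proof of Proposition~\ref{buoc-mc}: for each $y\in X_+$, the truncated net $(x_\alpha\wedge y)$ is order Cauchy; but here I do not yet know order completeness, so instead I would work inside $(X^*)^\sim_n$ or use the $\sigma(X,X^\sim_{uo})$-limit directly together with the uo-Cauchy estimate $\abs{x_\beta\wedge z - x_{\beta'}\wedge z}\wedge y\leq x_{\alpha,\alpha'}$ to pass to the limit and conclude $\abs{x_\beta - x}\wedge y\leq x_{\alpha,\alpha'}\downarrow 0$, giving $x_\alpha\xrightarrow{uo}x$.

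For \eqref{predual-cha4}$\implies$\eqref{predual-cha1} I would argue as follows. Assume $X$ is monotonically complete. First, since $X^\sim_{uo}$ separates points of $X$, so does $X^\sim_n$, and by Lemma~\ref{uo-tec-1} $X^\sim_{uo}$ is order dense in $X^\sim_n$ (it is an ideal by the remark after Theorem~\ref{uo-dual-cont}); being norm closed as well, Lemma~\ref{uo-tec-2} applies with $Y=X^\sim_{uo}$, $Z=X^\sim_n$, giving $(X^\sim_n)^\sim_n$ lattice isometric to $(X^\sim_{uo})^\sim_n$ via restriction. Next, $j:X\to (X^\sim_n)^\sim_n$ is an into lattice isometry with order dense range by \cite[Theorem~1.70]{AB:06}. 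Composing with the restriction isometry, I get an into lattice isometry $X\to (X^\sim_{uo})^\sim_n$ with order dense range. But $X^\sim_{uo}$ is an order continuous Banach lattice (remark after Theorem~\ref{uo-dual-cont}), so $(X^\sim_{uo})^\sim_n = (X^\sim_{uo})^*$, and the map is precisely the evaluation $j$ of the statement. It remains to show $j$ is surjective onto $(X^\sim_{uo})^*$. Here monotone completeness of $X$ enters: given $0\le \xi\in (X^\sim_{uo})^*=(X^\sim_{uo})^\sim_n$, pull the order dense range fact to produce a net $0\le x_\alpha\uparrow$ in $X$ with $j(x_\alpha)\uparrow \xi$ in $(X^\sim_{uo})^\sim_n$; the net is norm bounded since $\norm{j(x_\alpha)}=\norm{x_\alpha}\le\norm{\xi}$, so by monotone completeness $x_\alpha\uparrow x$ in $X$ for some $x$, and one checks $j(x)=\xi$ using order continuity of elements of $(X^\sim_{uo})^\sim_n$ evaluated against the increasing net. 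Extending to general $\xi$ by taking positive and negative parts finishes surjectivity.

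The main obstacle I anticipate is the passage in \eqref{predual-cha2}$\implies$\eqref{predual-cha3}: one must produce the uo-limit of the uo-Cauchy net from mere relative $\sigma(X,X^\sim_{uo})$-compactness without having order completeness in hand, and must verify that the $\sigma(X,X^\sim_{uo})$-subnet limit $x$ actually satisfies the order-domination estimate $\abs{x_\beta-x}\wedge y\le x_{\alpha,\alpha'}$ rather than merely a weak inequality. The resolution is that order intervals $[-w,w]$ in $X$ are $\sigma(X,X^\sim_n)$-closed (hence $\sigma(X,X^\sim_{uo})$-closed, using that $X^\sim_{uo}$ separates points and is an ideal), together with the fact that lattice operations like $t\mapsto \abs{t}\wedge y$ on a bounded set can be controlled via the functionals in $X^\sim_{uo}$; alternatively, one embeds everything into $(X^\sim_{uo})^\sim_n$ where order completeness is available and transfers the conclusion back along the regular (order dense) embedding using \cite[Theorem~3.2]{GTX:16}. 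A secondary subtlety is checking in \eqref{predual-cha1}$\implies$\eqref{predual-cha2} that the lattice isomorphism $j$ is norm-to-norm bounded both ways; this is standard for surjective lattice isomorphisms of Banach lattices but should be stated.
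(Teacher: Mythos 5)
There is a genuine gap in your \eqref{predual-cha4}$\implies$\eqref{predual-cha1} argument: you repeatedly treat the canonical maps as isometries, which is false in general and is exactly where the substance of this implication lies. \cite[Theorem~1.70]{AB:06} gives only a vector lattice isomorphism of $X$ onto an order dense sublattice of $\Xnn$; the evaluation map has norm $\leq 1$ but is an isometry iff $\Xn$ is norming (the paper even remarks after the theorem that $j$ is an isometry iff $\Xu$ is norming, iff $B_X$ is order closed --- not automatic here). Consequently your key step $\norm{j(x_\alpha)}=\norm{x_\alpha}\leq\norm{\xi}$ is unjustified: from $0\leq j(x)\leq\xi$ you only control $\norm{j(x)}$, and without $j$ being bounded below you cannot conclude that the upward directed set $\{x\in X_+: j(x)\leq\xi\}$ is norm bounded, so monotone completeness cannot be invoked to produce its supremum. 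What rescues this is a nontrivial fact about monotonically complete Banach lattices: they satisfy a weak Fatou estimate $\norm{x}\leq C\sup_\alpha\norm{x_\alpha}$ for $0\leq x_\alpha\uparrow x$ (\cite[Proposition~2.4.19]{MN:91}), whence $\Xn$ is isomorphically norming and, combined with Lemmas~\ref{uo-tec-1} and \ref{uo-tec-2}, $j$ is bounded below. The paper bypasses all of this by quoting \cite[Theorem~2.4.22]{MN:91}, which says that monotone completeness plus $\Xn$ separating points makes the evaluation $e:X\to\Xnn$ a surjective lattice isomorphism; composing with the restriction isometry $R:\Xnn\to(\Xu)_n^\sim=(\Xu)^*$ from Lemma~\ref{uo-tec-2} gives $j=Re$ surjective. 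Your hands-on surjectivity argument can be repaired along these lines, but as written it assumes the conclusion's quantitative part.

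A secondary weakness is in \eqref{predual-cha2}$\implies$\eqref{predual-cha3}. Your primary plan cannot work as stated: the elements $x_{\alpha,\alpha'}$ are suprema that need not exist in $X$ (order completeness is not yet available), and lattice expressions such as $t\mapsto\abs{x_\beta\wedge z-t\wedge z}\wedge y$ are neither $\sigma(X,\Xu)$-continuous nor do they have convex sublevel sets, so ``passing to the limit'' in the dominating inequality is not legitimate; closedness of order intervals does not fix this. Your fallback --- embed $X$ as an order dense (hence regular) sublattice of $(\Xu)^*=(\Xu)_n^\sim$ via \cite[Theorem~1.70]{AB:06} and work there --- is the paper's actual route, but it still needs the step you leave implicit: in $(\Xu)^*$ the norm bounded uo-Cauchy net $(j(x_\alpha))$ uo-converges to some $x^{**}$ (the paper cites \cite[Theorem~2.2]{G:14}, which also gives weak$^*$ convergence), and one must identify $x^{**}$ with $j(x)$, where $x$ is the $\sigma(X,\Xu)$-subnet limit supplied by \eqref{predual-cha2}; this uses that norm bounded uo-convergence in $(\Xu)^*$ implies $\sigma((\Xu)^*,\Xu)$-convergence, which holds precisely because $\Xu$ is order continuous (Theorem~\ref{uo-dual-cont} and Proposition~\ref{dual-uo-cont}). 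Once $x^{**}=j(x)\in j(X)$, regularity and \cite[Theorem~3.2]{GTX:16} transfer the uo-convergence back to $X$, as you indicate. The remaining implications (\eqref{predual-cha1}$\implies$\eqref{predual-cha2} by Banach--Alaoglu, \eqref{predual-cha3}$\implies$\eqref{predual-cha4} by Proposition~\ref{buoc-mc}) are fine and agree with the paper.
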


\begin{proof}
The implication \eqref{predual-cha1}$\implies$\eqref{predual-cha2} is
immediate by Banach-Alaoglu Theorem. 

Assume that \eqref{predual-cha2} holds.
Clearly, $j$ is one-to-one and $\norm{j}\leq 1$. 
Recall that $X_{uo}^\sim$ is an ideal of $X_n^\sim$. Note
also that $(X_{uo}^\sim)^*=(X_{uo}^\sim)_n^\sim$ since $X_{uo}^\sim$ is order
continuous by Theorem~\ref{uo-dual-cont}. Thus by
\cite[Theorem~1.70]{AB:06} again, $j$ is a bijective vector lattice isomorphism
between
$X$ and
$j(X)$, and $j(X)$ is a regular sublattice in $(X_{uo}^\sim)^*$.
Let $(x_\alpha)$ be a norm bounded uo-Cauchy net in $X$. Then $(j(x_\alpha))$ is
uo-Cauchy in $j(X)$, and thus in $(X_{uo}^\sim)^*$, by
\cite[Theorem~3.2]{GTX:16}. Since $(j(x_\alpha))$ is norm bounded in
$(X_{uo}^\sim)^*$, by \cite[Theorem~2.2]{G:14}, there exists $x^{**}\in
(X_{uo}^\sim)^*$ such that
$$j(x_\alpha)\xrightarrow[\sigma((X_{uo}^\sim)^*,X_{uo}^\sim)]{uo} x^{**}\mbox{
in
}(X_{uo}^\sim)^*.$$
On the other hand, by the assumption, there exist $x\in X$
and a subnet $(x_{\beta})$ of $(x_\alpha)$ such that
$x_\beta\xrightarrow{\sigma(X,X_{uo}^\sim)}x$, or equivalently,
$$j(x_\beta)\xrightarrow{\sigma((X_{uo}^\sim)^*,X_{uo}^\sim)} j(x).$$
It follows
that $x^{**}=j(x)\in j(X)$, and  $j(x_\alpha)\xrightarrow{uo} j(x)$ in
$(X_{uo}^\sim)^*$. By \cite[Theorem~3.2]{GTX:16}
again, we have $j(x_\alpha)\xrightarrow{uo} j(x)$ in $j(X)$, and
consequently,
$x_\alpha\xrightarrow{uo}x$ in $X$. This proves
\eqref{predual-cha2}$\implies$\eqref{predual-cha3}.
The implication \eqref{predual-cha3}$\implies$\eqref{predual-cha4} follows
from Proposition~\ref{buoc-mc}.

Suppose that \eqref{predual-cha4} holds. By \cite[Theorem~2.4.22]{MN:91},
$X$ is
lattice isomorphic to
$(X_n^\sim)_n^\sim$ via the evaluation mapping $e : X \to \Xnn$. Since $\Xu$ is
order
continuous, $(\Xu)^* = (\Xu)_n^\sim$.
Let $R: \Xnn\to (\Xu)_n^\sim=(\Xu)^*$ be the restriction map $R\phi =
\phi_{|\Xu}$. By Lemma~\ref{uo-tec-1}, $X_{uo}^\sim$ is order dense in
$X_n^\sim$.  Thus by Lemma~\ref{uo-tec-2}, $R$ is a lattice isometry from 
$\Xnn$ onto $(\Xu)^*$.
This proves \eqref{predual-cha4}$\implies$\eqref{predual-cha1} since $j = Re$.
\end{proof}

We point out that the mapping $j$ in Theorem~\ref{predual-cha} is an isometry
iff $X_{uo}^\sim$ is norming on $X$, 
iff $X_n^\sim$ is norming, iff $0\leq
x_\alpha\uparrow x$ implies $\norm{x_\alpha}\uparrow \norm{x}$
(\cite[Theorem~2.4.21]{MN:91}), iff the closed
unit ball $B_X$ is order closed.

\section{Dual representations of convex functionals}\label{representation}

In this section, we apply the general theory pertaining to the
uo-dual developed in the previous sections to the representations of convex
functionals on a Banach lattice $X$ with respect to the duality $(X,\Xu)$.
The main motivation is Theorem~\ref{general-rep}, which gives a formulation of
Theorem~\ref{rep-orl-gx} in
the general setting of Banach lattices.
The main tool is the next theorem, whose conclusion  should be compared with the
$C$-property introduced in \cite{BF:10}.

\begin{theorem}\label{uo-exist}
Let $X$ be a vector lattice and $I$ be an ideal of $X^\sim$
containing a strictly positive order continuous functional
$\phi$ on $X$. If $C$ is a convex subset of $X$ and $x \in
\overline{C}^{\sigma(X,I)}$, then there exists a sequence
$(x_n)$ in $C$ such that $x_n \xrightarrow{uo} x$ in $X$.
\end{theorem}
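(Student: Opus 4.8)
The plan is to pass to a countable "slice" of the duality using the strictly positive order continuous functional $\phi$, and then to produce the sequence via a classical separation/Hahn--Banach argument in an $L^1$-type setting. First I would observe that since $\phi$ is strictly positive and order continuous, the functional $\norm{y}_\phi := \phi(|y|)$ is an $L$-norm on $X$ which is in particular a lattice seminorm; because $\phi$ is strictly positive it is actually a norm. Let $X_\phi$ denote the completion of $(X,\norm{\cdot}_\phi)$, which is an $AL$-space (abstract $L^1$), and let $q : X \to X_\phi$ be the canonical (injective, norm-decreasing) lattice homomorphism. The key soft fact I would use is that norm convergence in $X_\phi$ controls uo-convergence in $X$ along subsequences: if $q(y_n) \to q(y)$ in $X_\phi$, then, passing to a subsequence, $\phi(|y_n - y| \wedge u) \to 0$ fast enough that $|y_n - y|\wedge u \xrightarrow{o} 0$ in $X$ for each $u \in X_+$ (this is exactly the standard "$L^1$ convergence implies a.e.\ convergence along a subsequence" argument, transported to vector lattices via the strict positivity and order continuity of $\phi$, using that $\phi$ restricted to each order interval $[0,u]$ is strictly positive and order continuous). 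Hence $q(y_n)\to q(y)$ in $X_\phi$ forces a subsequence with $y_n \xrightarrow{uo} y$ in $X$.

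With that reduction, it suffices to show $q(x)$ lies in the $\norm{\cdot}_\phi$-norm closure of $q(C)$ in $X_\phi$. Here is where convexity enters: $q(C)$ is a convex subset of the Banach space $X_\phi$, so its norm closure equals its weak closure, i.e.\ its $\sigma(X_\phi, X_\phi^*)$-closure, by Mazur's theorem. So I would be done if I can show $q(x) \in \overline{q(C)}^{\sigma(X_\phi,X_\phi^*)}$. Suppose not; then by Hahn--Banach there is $g \in X_\phi^*$ strictly separating $q(x)$ from $q(C)$: $\operatorname{Re} g(q(x)) > \sup_{c\in C}\operatorname{Re} g(q(c))$ (everything is real here, so just $g(q(x)) > \sup_{c \in C} g(q(c))$). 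But $g \circ q$ is a linear functional on $X$; I claim it belongs to $I$. Indeed, $|g\circ q(y)| \le \norm{g}_{X_\phi^*}\,\norm{q(y)}_\phi = \norm{g}\,\phi(|y|)$, so $g\circ q$ is order bounded by a multiple of $\phi$ in $X^\sim$, hence lies in the ideal generated by $\phi$, which is contained in $I$ since $\phi \in I$ and $I$ is an ideal. Therefore $g\circ q \in I$ separates $x$ from $C$ in the duality $(X,I)$, contradicting $x \in \overline{C}^{\sigma(X,I)}$.

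Assembling: from $x \in \overline{C}^{\sigma(X,I)}$ we get $q(x) \in \overline{q(C)}^{\sigma(X_\phi,X_\phi^*)} = \overline{q(C)}^{\norm{\cdot}_\phi}$, so there is a sequence $(x_n)$ in $C$ with $\phi(|x_n - x|) \to 0$; passing to a subsequence (still called $(x_n)$, still in $C$) gives $x_n \xrightarrow{uo} x$ in $X$ by the first paragraph. I expect the main obstacle to be the first step — making rigorous that $\norm{\cdot}_\phi$-convergence yields uo-convergence along a subsequence — since this requires carefully exploiting strict positivity and order continuity of $\phi$ on order intervals to run the Borel--Cantelli-style extraction in a general vector lattice rather than in a concrete function space; one convenient route is to note $\phi$ is strictly positive and order continuous on the order-complete ideal it generates (or to work inside the order interval $[-u,u]$ for each fixed $u$, where $\phi$ induces a genuine $L^1$-type control), choosing the subsequence so that $\sum_n \phi(|x_{n}-x|\wedge u) < \infty$ and deducing $|x_n - x|\wedge u \xrightarrow{o} 0$. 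The rest (Mazur, Hahn--Banach, the ideal membership of the separating functional) is routine.
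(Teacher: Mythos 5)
Your second half is fine, and it is a genuinely different (equally valid) treatment of the duality step: where the paper invokes Kaplan's theorem to identify the topological dual of $(X,\abs{\sigma}(X,I))$ with $I$ and then applies Mazur's theorem directly in $X$, you apply Mazur and Hahn--Banach in the completion $X_\phi$ and observe that any separating functional $g\circ q$ satisfies $\abs{g\circ q}\leq \norm{g}\,\phi$, hence lies in the principal ideal generated by $\phi$ and therefore in $I$. Either way one obtains a sequence $(x_n)$ in $C$ with $\phi(\abs{x_n-x})\leq 2^{-n}$, which is exactly where the paper is at the corresponding point of its proof.

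The genuine gap is the step you yourself flag: deducing $\abs{x_n-x}\wedge u\xrightarrow{o}0$ \emph{in $X$} from $\sum_n\phi(\abs{x_n-x}\wedge u)<\infty$. The Borel--Cantelli argument cannot be ``run inside $X$'': order convergence in $X$ requires a dominating net decreasing to $0$ that lives in $X$, and the natural dominators, the tails $\sup_{k\geq n}(\abs{x_k-x}\wedge u)$ or $\sum_{k\geq n}\abs{x_k-x}$, need not exist, since $X$ is an arbitrary vector lattice with no ($\sigma$-)order completeness assumed; restricting to the order interval $[-u,u]$ does not help, as that interval is no more order complete than $X$ is, so the ``genuine $L^1$-type control'' you appeal to only produces order convergence in the completion, not in $X$. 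What is true --- and what the paper does --- is that the tails do exist in the $L^1$-completion $\widetilde{X}=X_\phi$ (which you have already built), giving $x_n\xrightarrow{o}x$ and hence $x_n\xrightarrow{uo}x$ there; the remaining, nontrivial, ingredient is that $X$ sits in $X_\phi$ as a regular sublattice (this part is easy: $y_\alpha\downarrow 0$ in $X$ forces $\norm{y_\alpha}_\phi=\phi(y_\alpha)\downarrow 0$, so $y_\alpha\downarrow 0$ in $X_\phi$) and that uo-convergence of a net of elements of a regular sublattice passes from the ambient lattice down to the sublattice, which is \cite[Theorems~3.2 and 4.1]{GTX:16}. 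That transfer principle is precisely the missing idea; it is not a routine adaptation of the measure-theoretic argument, and without citing or proving it your first paragraph does not yield uo-convergence in $X$, which is the conclusion of the theorem. Once it is invoked, your proof closes and coincides in substance with the paper's.
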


\begin{proof}
By Kaplan's Theorem (\cite[Theorem~3.50]{AB:06}), the topological dual of $X$
under $\abs{\sigma}(X,I)$ is precisely $I$. Thus by Mazur's Theorem, we have 
$$
\overline{C}^{{\sigma}(X,I)}=\overline{C}^{\abs{\sigma}(X,I)}.$$
Consequently, there exists a net
$(x_\alpha) $ in $ C$ such that $x_\alpha
\xrightarrow{{\abs{\sigma}(X,I)}} x$. 
In particular, $\phi(\abs{x_\alpha-x})\rightarrow 0$. 
Choose $(\alpha_n)$ such that $$\phi(\abs{x_{\alpha_n}-x})\leq
\frac{1}{2^n}$$
for
each $n\geq 1$. 
For the sake of convenience,  write $x_n:=x_{\alpha_n}$. 
It remains to be shown that $x_{n}\xrightarrow{uo}x$ in $X$. 
For any $y\in X$, put $\norm{y}_L=\phi(\abs{y})$. 
Then $\norm{\cdot}_L$ is a norm on $X$, and the norm
completion $\widetilde{X}$ of $(X,\norm{\cdot}_L)$ is an $L^1$-space, in which
$X$ sits as a
regular sublattice; cf.~\cite[Section~4]{GTX:16}. 
Since $\norm{x_n-x}_L\leq
\frac{1}{2^n}$, it follows that $\sum_1^\infty\abs{x_n-x}$ converges in
$\widetilde{X}$. As $$\abs{x_n-x}\leq
\sum_{n}^\infty\abs{x_k-x}\downarrow0 \mbox{ in }\widetilde{X},$$
we have $x_n\xrightarrow{o}x$
in $\widetilde{X}$, so that $x_n\xrightarrow{uo}x$ in $\widetilde{X}$,
and in $X$ by \cite[Theorem~4.1]{GTX:16}.
\end{proof}

The following is an interesting special case of Theorem~\ref{uo-exist}.
Recall that a Banach function space over a
$\sigma$-finite measure space admits strictly positive order
continuous functionals (see, e.g., \cite[Proposition~5.19]{GTX:16}).

\begin{corollary}
Let $X$ be a Banach function space over a $\sigma$-finite
measure space.
For any convex subset $C$ of $X$ and $f \in
\overline{C}^{\sigma(X,X_n^\sim)}$, there exists a sequence
$(f_n)$ in $C$ such that $f_n\xrightarrow{a.e.}f$.
\end{corollary}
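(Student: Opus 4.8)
The plan is to derive this as a direct application of Theorem~\ref{uo-exist}. A Banach function space $X$ over a $\sigma$-finite measure space $(\Omega,\Sigma,\mu)$ carries a strictly positive order continuous functional, say $\phi$ (this is the cited fact, e.g.\ \cite[Proposition~5.19]{GTX:16}; concretely one can take $\phi(g)=\int_\Omega gw\,d\mu$ for a suitable strictly positive weight $w\in X_n^\sim$ arising from an exhaustion $\Omega=\bigcup_k\Omega_k$ with $\mu(\Omega_k)<\infty$). Since $\phi\in X_n^\sim$ and $X_n^\sim$ is an ideal of $X^\sim$, the hypotheses of Theorem~\ref{uo-exist} are met with $I=X_n^\sim$.

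First I would invoke Theorem~\ref{uo-exist} with this choice of $I$: given the convex set $C$ and $f\in\overline{C}^{\sigma(X,X_n^\sim)}$, it produces a sequence $(f_n)$ in $C$ with $f_n\xrightarrow{uo}f$ in $X$. The only remaining point is to translate uo-convergence in $X$ into a.e.\ convergence. This is exactly the remark made in the introduction: if $X$ is a lattice ideal of $L^0(\Omega,\Sigma,\mu)$ — which a Banach function space over a $\sigma$-finite measure space is — then a \emph{sequence} uo-converges in $X$ if and only if it converges $\mu$-a.e. So I would simply cite that equivalence to conclude $f_n\xrightarrow{a.e.}f$.

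The one place warranting a sentence of care is the identification of a Banach function space as a regular (indeed ideal) sublattice of $L^0(\Omega,\Sigma,\mu)$, so that the ``sequential uo $\Leftrightarrow$ a.e.'' transfer is legitimate; this is part of the definition of a Banach function space and the regularity of ideals already recorded in the excerpt, so no real obstacle arises. I expect the proof to be essentially two lines: quote Theorem~\ref{uo-exist} to get the uo-null sequence, then quote the characterization of uo-convergence of sequences in ideals of $L^0$ to upgrade it to a.e.\ convergence. There is no substantive difficulty; the corollary is purely a specialization of the theorem to the function-space setting.
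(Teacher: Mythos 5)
Your proof is correct and is exactly the paper's argument: apply Theorem~\ref{uo-exist} with $I=X_n^\sim$, using the cited fact that a Banach function space over a $\sigma$-finite measure space admits a strictly positive order continuous functional, and then identify sequential uo-convergence in an ideal of $L^0$ with a.e.\ convergence.
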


In line with order closures, we define the \emph{bounded uo closure} of a set
$C$ in a Banach lattice $X$ by
$$\overline{C}^{buo}:=\big\{x\in X: \;x_\alpha
\xrightarrow{uo}x\mbox{ in }X\mbox{ for some bounded net }(x_\alpha)\mbox{ in
}C\big\}.$$ 
We similarly
define the \emph{bounded uo sequential closure} $\overline{C}^{seq-buo}$ by
replacing
nets with sequences.
We say that $C$ is \emph{boundedly uo closed}, respectively, \emph{boundedly uo
sequentially closed} in $X$, if
$C=\overline{C}^{buo}$, respectively, $C=\overline{C}^{seq-buo}$. It is clear
that every $\sigma(X,\Xu)$-closed set is boundedly uo (sequentially) closed, and
for any set
$C$,
$$\overline{C}^{seq-buo}\subset \overline{C}^{buo}\subset
\overline{C}^{\sigma(X,X_{uo}^\sim)}.$$
It is natural to wonder when the reverse inclusions hold. Namely, if $x\in
 \overline{C}^{\sigma(X,X_{uo}^\sim)}$, does there always exist a norm bounded
net
(sequence)
in $C$ that uo-converges to $x$?  This is clearly a strengthened version of
Theorem~\ref{uo-exist}.

\begin{proposition}\label{buo-closure}
Let $X$ be a $\sigma$-order complete Banach lattice. The following statements
are equivalent. 
\begin{enumerate}
 \item\label{last1} $\overline{C}^{buo}=\overline{C}^{\sigma(X,\Xu)}$ for every
convex set
$C$.
\item\label{last2} $X$ and $X^*$ are both order continuous. 
\end{enumerate}
\end{proposition}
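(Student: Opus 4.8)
The plan is to prove the two implications separately, using the machinery already developed. For \eqref{last2}$\implies$\eqref{last1}: if $X$ and $X^*$ are both order continuous, then by Corollary~\ref{wick} we have $\Xu = X^*$, so $\sigma(X,\Xu) = \sigma(X,X^*)$ is the usual weak topology. Given a convex set $C$ and $x \in \overline{C}^{\sigma(X,X^*)}$, Theorem~\ref{uo-exist} does not immediately apply (it needs a strictly positive order continuous functional, which need not exist globally). Instead I would localize: the weak closure of a convex set equals its norm closure (Mazur), so pick a sequence $(x_n)$ in $C$ with $x_n \to x$ in norm. Passing to a subsequence with $\norm{x_n - x} \le 2^{-n}$, the element $u = \sum_n \abs{x_n - x}$ exists in $X$ (here $\sigma$-order completeness, or just norm completeness and order continuity, guarantees the sum converges in order), and then $\abs{x_n - x} \le \sum_{k \ge n}\abs{x_k - x} \downarrow 0$, giving $x_n \xrightarrow{o} x$, hence $x_n \xrightarrow{uo} x$, with $(x_n)$ norm bounded. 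Thus $x \in \overline{C}^{buo}$, and combined with the trivial inclusion $\overline{C}^{buo} \subset \overline{C}^{\sigma(X,\Xu)}$ we get equality.

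For the converse \eqref{last1}$\implies$\eqref{last2}, I would argue by contraposition, splitting into the two ways order continuity can fail. First suppose $X$ is not order continuous. Then $X$ contains a lattice copy of $\ell^\infty$, or more concretely there is a disjoint bounded sequence $(u_n)$ in $X_+$ and a positive functional that does not vanish on it; the standard device is to find an increasing bounded sequence $0 \le y_n \uparrow$ with no supremum but whose weak-star-type limit in the bidual is controlled. Concretely, take $C$ to be the convex hull of a suitable bounded sequence whose $\sigma(X,\Xu)$-closure contains a point $x$ that cannot be a bounded uo-limit of any net from $C$ — the obstruction being that bounded uo-convergence in $X$ would force an order-type conclusion that order-discontinuity forbids. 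Second, suppose $X$ is order continuous but $X^*$ is not; then $\Xu = X^* = X_n^\sim$ is order continuous precisely when $X^*$ is, so $\Xu \subsetneq X^*$ strictly, and $\sigma(X,\Xu)$ is strictly coarser than the weak topology. Here I would exhibit a convex set $C$ whose $\sigma(X,\Xu)$-closure is strictly larger than its weak closure (e.g., $C = B_X$ itself, whose $\sigma(X,\Xu)$-closure, by Theorem~\ref{predual-cha} and its aftermath, can fail to be norm closed / weakly closed when $\Xu$ is not norming or when $X \ne (\Xu)^*$), while $\overline{C}^{buo}$ stays inside the smaller norm-closed set.

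The main obstacle I anticipate is the converse direction, specifically producing the right convex witness set $C$ and verifying that $\overline{C}^{buo}$ genuinely fails to catch a point of $\overline{C}^{\sigma(X,\Xu)}$. Bounded uo-limits are a delicate class — they are not a topological closure in general — so showing a specific $x$ is \emph{not} a bounded uo-limit of any net in $C$ requires a uniform argument: I would aim to show that if $x_\alpha \xrightarrow{uo} x$ with $(x_\alpha)$ bounded and lying in $C$, then testing against a suitable $\phi \in \Xu$ (using that disjoint bounded sequences are uo-null, Theorem~\ref{uo-dual-cont}\eqref{o-uo-cont2}) forces $\phi(x)$ into a range incompatible with $x$ being in the $\sigma(X,\Xu)$-closure but not the weak closure. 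For the order-discontinuous case the canonical $\ell^\infty$-subspace and its summing-functional-type behaviour should supply exactly this; for the $X$ order continuous but $X^*$ not case, the failure of $\Xu$ to be norming (equivalently $B_X$ not order closed, as noted after Theorem~\ref{predual-cha}) is what I would exploit. I expect the proof to invoke \cite[Theorem~2.2]{G:14} or \cite[Theorem~3.2]{GTX:16} to transfer uo-convergence between $X$ and its biduals, mirroring the argument in Theorem~\ref{predual-cha}.
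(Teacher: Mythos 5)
Your forward direction \eqref{last2}$\implies$\eqref{last1} is correct and is essentially the paper's argument: Corollary~\ref{wick} gives $\Xu=X^*$, Mazur reduces to the norm closure, and you then extract from a norm-convergent sequence a subsequence that order converges (the paper simply cites \cite[Lemma~3.11]{GX:14} for this; your explicit summation argument is the standard proof of that lemma and is fine).

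The converse is where there is a genuine gap: you have no actual witness set, and the one concrete suggestion you make does not work. The paper's device, which is the key idea you are missing, is Ostrovskii's theorem (\cite[Theorem~2.34]{HZ:07}): when $X$ is not order continuous it contains a \emph{regular} lattice copy of $\ell^\infty$, and Ostrovskii provides a subspace $W\subset\ell^\infty$ (a subspace, hence convex --- this is the set $C$) and a point $w\in\overline{W}^{\sigma(\ell^\infty,\ell^1)}$ that is not the $\sigma(\ell^\infty,\ell^1)$-limit of any \emph{sequence} in $W$. Regularity is what lets one (a) identify restrictions of $\Xu$-functionals to the copy as elements of $(\ell^\infty)^\sim_{uo}=\ell^1$, so that $w\in\overline{W}^{\sigma(X,\Xu)}$, and (b) transfer a hypothetical bounded uo-convergent net from $X$ back into $\ell^\infty$, where bounded uo-convergence is coordinatewise convergence of a bounded net, from which one extracts a sequence converging in $\sigma(\ell^\infty,\ell^1)$ to $w$ --- contradicting the choice of $w$. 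The case $X^*$ not order continuous is handled identically with a regular copy of $\ell^1$ and $(\ell^1)^\sim_{uo}=c_0$. Note that the obstruction is the net-versus-sequence gap in a weak-star-type topology, not a separation phenomenon: your plan to ``test against a suitable $\phi\in\Xu$'' cannot succeed, because $w$ genuinely lies in the $\sigma(X,\Xu)$-closure and no $\Xu$-functional can exclude it.

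Your proposal for the case ``$X$ order continuous, $X^*$ not,'' namely taking $C=B_X$ and exploiting a failure of $\Xu$ to be norming, fails outright. For a norm bounded convex set, Theorem~\ref{uo-exist} shows that the buo closure and the $\sigma(X,\Xu)$-closure coincide whenever $\Xu$ contains a strictly positive functional; for instance in $X=\ell^1$ (the prototypical space with $X$ order continuous and $X^*$ not), $B_{\ell^1}$ is both boundedly uo closed and $\sigma(\ell^1,c_0)$-closed, so it separates nothing. In fact $\Xu$ need not be norming-deficient in any useful way here and may even be trivial (e.g.\ $X=L^1[0,1]$ has $\Xu=\{0\}$), which is why the paper works inside the regular $\ell^1$-copy rather than with global data of $X$. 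Without Ostrovskii's theorem, or some equally concrete replacement producing a convex set whose $\sigma$-closure contains a point unreachable by bounded uo-convergent nets, your contrapositive argument does not get off the ground.
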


\begin{proof}
Suppose that  $X$ and $X^*$ are both order continuous. Let $C$ be a convex set
in $X$. It suffices to show that $\overline{C}^{\sigma(X,\Xu)}\subset
\overline{C}^{buo}$.
By Corollary~\ref{wick}, $\Xu=X^*$,  and  $\sigma(X,\Xu)$ is the
weak topology on $X$. Thus by Mazur's Theorem,
$\overline{C}^{\sigma(X,\Xu)}=\overline{C}^{\norm{\cdot}}$.
Since  every norm
convergent sequence admits a subsequence order converging to the same
limit (see, e.g., \cite[Lemma~3.11]{GX:14}), it follows that 
$\overline{C}^{\sigma(X,\Xu)}\subset
\overline{C}^{buo}$.

Conversely, suppose first that $X$ is not order continuous.
Then $X$ has a lattice isomorphic copy of $\ell^\infty$. 
A close look at the proof of \cite[Theorem~4.51]{AB:06} shows that the copy of
$\ell^\infty$ can be chosen to be regular in $X$. For convenience, we simply
assume that $\ell^\infty\subset X$.
By Ostrovskii's Theorem (cf.~\cite[Theorem~2.34]{HZ:07}), there exist a subspace
$W$ of $\ell^\infty$ and
$w\in \overline{W}^{\sigma(\ell^\infty,\ell^1)}$ such that $w$ is not the
$\sigma(\ell^\infty,\ell^1)$-limit of any sequence in $W$. 
By \cite[Theorem~3.2]{GTX:16},
since $\ell^\infty$ is regular in $X$, every uo-null net in $\ell^\infty$ is
also uo-null in $X$. Thus the restriction of each $\phi\in
\Xu$
to $\ell^\infty$ belongs to $(\ell^\infty)_{uo}^\sim=\ell^1$. 
Therefore, $w\in \overline{W}^{\sigma(X,\Xu)}$.
Suppose that $W$ admits a bounded net $(x_\alpha)$ such that
$x_\alpha\xrightarrow{uo}w$
in $X$.
By \cite[Theorem~3.2]{GTX:16} again, $x_\alpha\xrightarrow{uo}w$ in
$\ell^\infty$.
Clearly, we can select countably many $\alpha_n$'s such that 
$x_{\alpha_n}\rightarrow w$ coordinatewise. Since $(x_{\alpha_n})$ is bounded
in
$\ell^\infty$, it follows that
$x_{\alpha_n}\xrightarrow{\sigma(\ell^\infty,\ell^1)}w$, contradicting the
choice of $w$.

Suppose now that $X^*$ is not order continuous. Then by
\cite[Theorem~4.69]{AB:06}, $X$ has a lattice
copy of $\ell^1$. Since $\ell^1$ is order continuous,
it is easily seen that it is regular in $X$. By Ostrovskii's Theorem again,
there exist a subspace
$W$ of $\ell^1$ and
$w\in \overline{W}^{\sigma(\ell^1,c_0)}$ such that $w$ is not the
$\sigma(\ell^1,c_0)$-limit of any sequence in $W$. Since
$(\ell^1)^\sim_{uo}=c_0$, the same arguments as above also lead to a
contradiction.
\end{proof}

While Proposition~\ref{buo-closure} asserts that the bounded uo closure
and the $\sigma(X,\Xu)$-closure of a general convex set rarely coincide, the
situation
is different for the coincidence of bounded uo closedness and
$\sigma(X,\Xu)$-closedness of convex sets, where 
an analogue of the Krein-Smulian property  arises naturally.
For any $x\in X$, put $$\norm{x}_{uo}:=\sup_{\phi\in
X_{uo}^\sim,\norm{\phi}=1}\abs{\phi(x)}.$$
Then the set $$B:=\big\{x\in
X:\norm{x}_{uo}\leq
1\big\}$$ is $\sigma(X,X_{uo}^\sim)$-closed, and is thus boundedly uo closed. We
say that
$\sigma(X,X_{uo}^\sim)$ satisfies the \emph{Krein-Smulian property} if every
convex set of $X$ is $\sigma(X,X_{uo}^\sim)$-closed whenever its
intersections with
$kB$ are $\sigma(X,X_{uo}^\sim)$-closed for all $k\geq 1$.

\begin{corollary}\label{uo-closed-all}
Let $X$ be a Banach lattice that admits a strictly positive boundedly 
uo-continuous functional. Suppose that $X_{uo}^\sim$ is isomorphically norming
on $X$. For any convex set $C$ in $X$, the following are equivalent:
\begin{enumerate}
 \item\label{uo-closed-all1} $C$ is boundedly uo (sequentially) closed,
\item\label{uo-closed-all2} $C$ is $\sigma(X,X_{uo}^\sim)$-sequentially closed,
\item\label{uo-closed-all3} $C\cap kB$ is $\sigma(X,X_{uo}^\sim)$-closed for all
$k\geq 1$.
\end{enumerate}
Consequently, the following statements are equivalent to each other:
\begin{enumerate}
 \item[(1')] Every boundedly uo (sequentially) closed convex set $C$  is
$\sigma(X,X_{uo}^\sim)$-closed,
\item[(2')] Every $\sigma(X,X_{uo}^\sim)$-sequentially closed convex set is 
$\sigma(X,X_{uo}^\sim)$-closed,
\item[(3')] $\sigma(X,X_{uo}^\sim)$ satisfies the {Krein-Smulian property}.
\end{enumerate}
\end{corollary}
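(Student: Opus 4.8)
The plan is to prove the chain of equivalences \eqref{uo-closed-all1} $\iff$ \eqref{uo-closed-all2} $\iff$ \eqref{uo-closed-all3} first, and then derive the equivalence of (1$'$), (2$'$), (3$'$) as a formal consequence. Throughout I would exploit that, since $X_{uo}^\sim$ is isomorphically norming, the mapping $j\colon X\to (X_{uo}^\sim)^*$ is an isomorphic (not necessarily isometric) embedding, and $B$ is — up to the norming constant — comparable to the closed unit ball of $X$; in particular $kB$ is norm bounded, so $\sigma(X,X_{uo}^\sim)$ restricted to $kB$ is metrizable precisely because $X_{uo}^\sim$, being order continuous by Theorem~\ref{uo-dual-cont} and admitting a strictly positive functional, is separable-like enough on bounded sets — more carefully, the strictly positive boundedly uo-continuous functional $\phi$ induces on $X$ the $L$-norm $\norm{\cdot}_L=\phi(\abs{\cdot})$, and one checks that on each $kB$ the topology $\sigma(X,X_{uo}^\sim)$ agrees with, or is at least comparable to, the $\norm{\cdot}_L$-topology, hence is metrizable on $kB$.

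For \eqref{uo-closed-all1} $\implies$ \eqref{uo-closed-all3}: if $x\in \overline{C\cap kB}^{\,\sigma(X,X_{uo}^\sim)}$, then by metrizability on the bounded set $(k+1)B$ (or by Theorem~\ref{uo-exist} applied with $I=X_{uo}^\sim$, noting the hypotheses supply a strictly positive order continuous functional in $I$) there is a \emph{sequence} $(x_n)$ in $C\cap kB$ with $x_n\xrightarrow{uo}x$; since $(x_n)$ is norm bounded, bounded uo sequential closedness of $C$ gives $x\in C$, and $\norm{x}_{uo}\le k$ by $\sigma(X,X_{uo}^\sim)$-closedness of $kB$, so $x\in C\cap kB$. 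For \eqref{uo-closed-all3} $\implies$ \eqref{uo-closed-all2}: a $\sigma(X,X_{uo}^\sim)$-convergent sequence is norm bounded (uniform boundedness, using that $X_{uo}^\sim$ is norming so $j$ is an embedding), hence lies in some $kB$; its limit then lies in $C\cap kB\subset C$. The implication \eqref{uo-closed-all2} $\implies$ \eqref{uo-closed-all1} is the crux: given a norm bounded net (resp.\ sequence) $(x_\alpha)$ in $C$ with $x_\alpha\xrightarrow{uo}x$, I must produce a \emph{sequence} in $C$ converging to $x$ in $\sigma(X,X_{uo}^\sim)$. For this I pass to $\widetilde{X}$, the $L^1$-completion under $\norm{\cdot}_L$, in which $X$ is a regular sublattice (cf.\ the proof of Theorem~\ref{uo-exist} and \cite[Section~4]{GTX:16}); uo-convergence of $(x_\alpha)$ in $X$ transfers to $\widetilde{X}$, and a norm bounded uo-null net in an $L^1$-space is $\norm{\cdot}_L$-convergent along a subsequence after a diagonal/Egorov argument — more precisely, I extract $(x_{\alpha_n})$ with $\norm{x_{\alpha_n}-x}_L\to 0$, exactly as in Theorem~\ref{uo-exist}. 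Then each $\psi\in X_{uo}^\sim$ factors through $\norm{\cdot}_L$-convergence on norm bounded sets (this is precisely the content of order continuity of $\psi$ together with the Meyer-Nieberg-type estimate $\sup_{x\in B_X}\abs{\psi}((\abs{x}-u)^+)\le\varepsilon$ used in the proof of Theorem~\ref{uo-dual-cont}), so $\psi(x_{\alpha_n})\to\psi(x)$ for every such $\psi$, i.e.\ $x_{\alpha_n}\xrightarrow{\sigma(X,X_{uo}^\sim)}x$, whence $x\in C$.

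The main obstacle I anticipate is exactly this last step: verifying that on norm bounded subsets of $X$, $\sigma(X,X_{uo}^\sim)$-convergence is \emph{implied by} $\norm{\cdot}_L$-convergence. This requires the uniform approximation estimate from Theorem~\ref{uo-dual-cont}'s proof — given $\psi\in X_{uo}^\sim$ and $\varepsilon>0$, pick $u\in X_+$ with $\sup_{x\in B_X}\abs\psi((\abs x-u)^+)\le\varepsilon$; then for $x_{\alpha_n},x$ in $MB_X$ one splits $\abs{\psi(x_{\alpha_n}-x)}\le\abs\psi(\abs{x_{\alpha_n}-x}\wedge Mu)+2M\varepsilon$, and the first term is controlled by $\norm{x_{\alpha_n}-x}_L\to 0$ once one knows $\abs\psi(\,\cdot\wedge Mu)$ is $\norm{\cdot}_L$-continuous, which follows since $\phi$ is strictly positive and order continuous so $\abs\psi$ restricted to the band generated by $u$ is absolutely continuous with respect to $\phi$. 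Once \eqref{uo-closed-all1}--\eqref{uo-closed-all3} are established, the equivalence of (1$'$), (2$'$), (3$'$) is immediate: (1$'$)$\iff$(2$'$) is the contrapositive application of \eqref{uo-closed-all1}$\iff$\eqref{uo-closed-all2} to the complement/to each convex set, and (1$'$)$\iff$(3$'$) follows by applying \eqref{uo-closed-all1}$\iff$\eqref{uo-closed-all3} — a convex set is $\sigma(X,X_{uo}^\sim)$-closed iff it is boundedly uo closed and (by the Krein--Smulian-type equivalence) iff all its intersections with $kB$ are $\sigma(X,X_{uo}^\sim)$-closed, which is precisely the statement that the Krein--Smulian property for $\sigma(X,X_{uo}^\sim)$ is equivalent to every boundedly uo closed convex set being $\sigma(X,X_{uo}^\sim)$-closed.
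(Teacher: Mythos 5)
Your overall skeleton — (1)$\implies$(3) via Theorem~\ref{uo-exist} applied to the convex set $C\cap kB$ (norm bounded because $X_{uo}^\sim$ is isomorphically norming), (3)$\implies$(2) because $\sigma(X,X_{uo}^\sim)$-convergent sequences are norm bounded, and the purely formal passage to (1$'$)--(3$'$) — is exactly the paper's. But you miss the one observation that makes the remaining implications trivial and that the paper relies on: by the very definition of $X_{uo}^\sim$ (boundedly uo-continuous functionals), every \emph{norm bounded} uo-convergent net, in particular sequence, is automatically $\sigma(X,X_{uo}^\sim)$-convergent to the same limit. Hence (3)$\implies$(1) for nets and (2)$\implies$(1) in its sequential form are one-liners, and the cycle closes as (1)$\implies$(3)$\implies$(2)$\implies$(1$_{\mathrm{seq}}$)$\implies$(3)$\implies$(1$_{\mathrm{net}}$); there is no need for a direct proof of (2)$\implies$(1) for nets, which is where all your difficulties arise.

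Your proposed proof of that step, which you yourself flag as the crux, is not established as written. First, the claim that a norm bounded uo-null net in an $L^1$-space admits a $\norm{\cdot}_L$-null subsequence ``by a diagonal/Egorov argument'' is false: the functions $n\one_{[0,1/n]}$ in $L^1[0,1]$ are norm bounded and a.e.\ null, yet no subsequence is $L^1$-null. What you actually need is $\phi(\abs{x_\alpha-x})\to 0$ for the whole net, and this holds precisely because $\phi$ is \emph{boundedly uo-continuous} — a hypothesis you never invoke at this point (order continuity and strict positivity alone do not suffice, as the example shows, since there $\phi$ is order continuous and strictly positive but not in $X_{uo}^\sim$). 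Second, the asserted absolute continuity of $\abs{\psi}$ with respect to $\phi$ on the band generated by $u$ is not a standard consequence of strict positivity and is left unproved; it is true under the hypotheses, but the only evident proof takes a sequence $0\leq y_n\leq Mu$ with $\phi(y_n)\to 0$, extracts via the $L^1$-completion and regularity (as in Theorem~\ref{uo-exist}) a subsequence that is order null in $X$, and applies order continuity of $\psi$ — i.e., it re-runs the uo-argument that the definition of $X_{uo}^\sim$ already gives for free. (Your metrizability remarks about $\sigma(X,X_{uo}^\sim)$ on bounded sets are likewise unjustified, since $X_{uo}^\sim$ need not be separable, though you do not ultimately rely on them.) Replacing the whole (2)$\implies$(1) detour by the one-line observation above turns your proposal into the paper's proof.
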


\begin{proof}
Since $B$ is norm bounded, the implication
\eqref{uo-closed-all1}$\implies$\eqref{uo-closed-all3} is immediate by
Theorem~\ref{uo-exist}. 
The implication \eqref{uo-closed-all3}$\implies$\eqref{uo-closed-all1} is
clear.
The implication
\eqref{uo-closed-all3}$\implies$\eqref{uo-closed-all2} follows because every
$\sigma(X,X_{uo}^\sim)$-convergent sequence is bounded. Clearly,
\eqref{uo-closed-all2}
implies the sequential version of \eqref{uo-closed-all1}.
This proves that \eqref{uo-closed-all1}, \eqref{uo-closed-all2} and
\eqref{uo-closed-all3} are equivalent. The equivalence of (1'), (2') and (3')
now follows immediately.
\end{proof}

\begin{proposition}\label{uo-rep}
Let $X$ be a Banach lattice that admits a strictly positive boundedly
uo-continuous functional. Suppose that $X_{uo}^\sim$ is isomorphically norming
on $X$ and
$\sigma(X,X_{uo}^\sim)$ satisfies
the {Krein-Smulian property} (these conditions are satisfied when $X$ is
monotonically
complete). Then for every proper convex functional
$\rho:X\rightarrow(-\infty,\infty]$, the following statements are equivalent to
each other:
\begin{enumerate}
 \item $\rho(x)=\sup_{\phi\in X_{uo}^\sim}\big(\phi(x)-\rho^*(\phi)\big)$ for
any $x\in X$, where $\rho^*(\phi)=\sup_{x\in X}\big(\phi(x)-\rho(x)\big)$ for
any $\phi\in \Xu$,
\item $\rho(x)\leq \liminf_\alpha\rho(x_\alpha)$ for any norm bounded net
$(x_\alpha)$ uo-converging to $x$, 
\item $\rho(x)\leq \liminf_n\rho(x_n)$ for any norm bounded sequence $(x_n)$
uo-converging to $x$.
\end{enumerate}
\end{proposition}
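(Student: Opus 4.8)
The plan is to prove the cycle (1)$\Rightarrow$(2)$\Rightarrow$(3)$\Rightarrow$(1), exploiting the Fenchel--Moreau machinery together with the closure results established above.

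First, (1)$\Rightarrow$(2) is the easy direction and follows the standard lower-semicontinuity argument. If (1) holds, then $\rho$ is a supremum of $\sigma(X,\Xu)$-continuous affine functionals $x\mapsto \phi(x)-\rho^*(\phi)$, hence $\rho$ is $\sigma(X,\Xu)$-lower semicontinuous. Now if $(x_\alpha)$ is a norm bounded net with $x_\alpha\xrightarrow{uo}x$, then since each $\phi\in\Xu$ is boundedly uo-continuous we get $\phi(x_\alpha)\to\phi(x)$, so $x_\alpha\xrightarrow{\sigma(X,\Xu)}x$; lower semicontinuity then gives $\rho(x)\leq\liminf_\alpha\rho(x_\alpha)$. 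The implication (2)$\Rightarrow$(3) is trivial, as every norm bounded uo-convergent sequence is in particular such a net.

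The substantive direction is (3)$\Rightarrow$(1). By the Fenchel--Moreau theorem, (1) is equivalent to $\rho$ being proper, convex, and $\sigma(X,\Xu)$-lower semicontinuous, i.e. to each sublevel set $\{\rho\leq c\}$ being $\sigma(X,\Xu)$-closed. So I would fix $c\in\R$, set $C=\{x\in X:\rho(x)\leq c\}$, which is convex, and aim to show $C$ is $\sigma(X,\Xu)$-closed. From (3), $C$ is boundedly uo sequentially closed: if $(x_n)$ is norm bounded in $C$ with $x_n\xrightarrow{uo}x$, then $\rho(x)\leq\liminf_n\rho(x_n)\leq c$, so $x\in C$. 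Now invoke Corollary~\ref{uo-closed-all}: under the standing hypotheses (strictly positive boundedly uo-continuous functional, $\Xu$ isomorphically norming), boundedly uo sequentially closed convex sets coincide with $\sigma(X,\Xu)$-sequentially closed convex sets, and the Krein--Smulian property for $\sigma(X,\Xu)$ then upgrades $\sigma(X,\Xu)$-sequential closedness to $\sigma(X,\Xu)$-closedness for convex sets (this is precisely the equivalence (1')$\iff$(3') there, applied to $C$). Hence $C$ is $\sigma(X,\Xu)$-closed for every $c$, so $\rho$ is $\sigma(X,\Xu)$-lower semicontinuous, and Fenchel--Moreau with respect to the dual pair $(X,\Xu)$ (whose topological dual under $\sigma(X,\Xu)$ is $\Xu$) yields $\rho=\rho^{**}$, which is exactly statement (1).

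It remains to verify the parenthetical claim that the hypotheses hold when $X$ is monotonically complete. Monotone completeness forces $\Xn$ to be norming and in fact $X=(X_n^\sim)_n^\sim$ by \cite[Theorem~2.4.22]{MN:91}, so $X_n^\sim$ separates points; combined with monotone completeness, Theorem~\ref{predual-cha} gives $X=(\Xu)^*$, whence $\Xu$ is (isometrically) norming and, being a dual space realization, $B_X$ is $\sigma(X,\Xu)$-compact. Krein--Smulian for $\sigma(X,\Xu)$ is then the classical Krein--Smulian theorem for the weak* topology on the dual space $X=(\Xu)^*$. Finally, a strictly positive $\phi\in X_n^\sim$ exists and lies in $\Xu=X^a$-type part; more carefully, one checks that the strictly positive order continuous functional can be taken in $\Xu$ because $\Xu$ is order dense in $\Xn$ (Lemma~\ref{uo-tec-1}) and order denseness preserves strict positivity on the carrier. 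The main obstacle is organizational rather than technical: making sure the Fenchel--Moreau biconjugate is computed with respect to the correct dual pair $(X,\Xu)$ and that Kaplan's theorem (as used in Theorem~\ref{uo-exist}) correctly identifies the dual, so that $\sigma(X,\Xu)$-closedness of all sublevel sets is genuinely equivalent to (1); once that is pinned down, everything else is assembling Corollary~\ref{uo-closed-all} and the standing hypotheses.
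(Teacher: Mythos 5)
Your main argument is correct and is essentially the paper's proof: Fenchel--Moreau reduces (1) to $\sigma(X,\Xu)$-closedness of the sublevel sets, and Corollary~\ref{uo-closed-all} (whose hypotheses are exactly the standing assumptions) converts that into bounded uo (sequential) closedness of the sublevel sets, which is a reformulation of (2), respectively (3); organizing this as a cycle (1)$\Rightarrow$(2)$\Rightarrow$(3)$\Rightarrow$(1) rather than a chain of equivalences is only a cosmetic difference.

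The one place you go astray is the verification of the parenthetical claim. You try to \emph{derive} a strictly positive order continuous functional from monotone completeness and then push it into $\Xu$ via order density. Neither step works: monotone completeness does not produce a strictly positive element of $\Xn$ (e.g.\ $\ell^\infty(\Gamma)$ for uncountable $\Gamma$ is monotonically complete, yet $(\ell^\infty(\Gamma))_n^\sim=\ell^1(\Gamma)$ contains no strictly positive functional), and order density of an ideal does not by itself yield a strictly positive element of that ideal (Lemma~\ref{uo-tec-1} goes from separation of points \emph{to} order density, not the reverse). The detour is also unnecessary: the existence of a strictly positive $\phi\in\Xu$ is a standing hypothesis of the proposition (the parenthetical only asserts that the norming and Krein--Smulian conditions follow from monotone completeness). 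From that hypothesis, $\Xu$ separates points of $X$ (for $x\neq 0$ pick $\psi\in[-\phi,\phi]\subset\Xu$ with $\psi(x)=\phi(\abs{x})>0$), so Theorem~\ref{predual-cha} applies and gives $X\cong(\Xu)^*$ as lattices; then $\Xu$ is isomorphically norming and the Krein--Smulian property for $\sigma(X,\Xu)$ follows from the classical weak$^*$ Krein--Smulian theorem, exactly as you conclude.
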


\begin{proof}
By Fenchel-Moreau Formula (\cite[Theorem~1.11]{B:11}), (1) is
equivalent to that $\rho$ is $\sigma(X,X_{uo}^\sim)$-lower semicontinuous, i.e.,
the level set $$C_\lambda:=\big\{x:\rho(x)\leq \lambda\big\}$$ is
$\sigma(X,X_{uo}^\sim)$-closed for any $\lambda\in\R$.
By Corollary~\ref{uo-closed-all}, this is equivalent to that $C_\lambda$ is
boundedly  uo (sequentially) closed for all $\lambda\in\R$. A straightforward
computation shows
that this last condition is equivalent to  (2), respectively, (3).

Finally, note that, since $X$ has a strictly positive uo-continuous functional,
$X_{uo}^\sim$ separates points of $X$. Thus if $X$ is
monotonically complete, then it is lattice isomorphic to $(X_{uo}^\sim)^*$, by
Theorem~\ref{predual-cha}. Consequently, $X_{uo}^\sim$ is
isomorphically norming on $X$ and
$\sigma(X,X_{uo}^\sim)$ satisfies
the {Krein-Smulian property}.
\end{proof}

\begin{theorem}\label{general-rep}Let $Y$ be an order continuous Banach lattice
with weak units,
and let $X=Y^*$. For any proper convex functional
$\rho:X\rightarrow(-\infty,\infty]$, the following statements are equivalent:
 \begin{enumerate}
 \item $\rho(x)=\sup_{y\in Y}(\la x,y\ra
-\rho^*(y))$ for any $x\in X$, where
$\rho^*(y)=\sup_{x\in X}(\la x,y\ra -\rho(x))$ for any $y\in
Y$,
\item $\rho(x)\leq \liminf_\alpha \rho(x_\alpha)$ for any norm bounded net
$(x_\alpha)$ uo-converging to $x$, 
\item $\rho(x)\leq \liminf_n \rho(x_n)$ for any norm bounded sequence $(x_n)$
uo-converging to $x$.
\end{enumerate}
\end{theorem}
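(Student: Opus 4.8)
The plan is to deduce Theorem~\ref{general-rep} from Proposition~\ref{uo-rep} by verifying that $X=Y^*$ satisfies the hypotheses of that proposition and, crucially, that the duality $(X,\Xu)$ coincides with the duality $(X,Y)$ appearing in the statement. First I would observe that since $Y$ is an order continuous Banach lattice with a weak unit, $Y$ is in particular $\sigma$-order complete and has a strictly positive order continuous functional: a weak unit $e$ together with order continuity of $Y$ gives a strictly positive functional on $Y$ when combined with a strictly positive functional on the ideal generated by $e$ (alternatively, cite \cite[Proposition~5.19]{GTX:16} or a standard representation of $Y$ as a Banach function space over a $\sigma$-finite measure). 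Then $X=Y^*$ is a Banach lattice, and by Proposition~\ref{dual-uo-cont}, since $Y$ is order continuous, $Y=(Y^*)_{uo}^\sim=\Xu$. Thus $\Xu=Y$ canonically, and the duality $(X,\Xu)$ in Proposition~\ref{uo-rep} is exactly the duality $(Y^*,Y)=(X,Y)$ of the theorem; moreover $\rho^*$ computed over $\Xu$ agrees with $\rho^*$ computed over $Y$.

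Next I would check the remaining hypotheses of Proposition~\ref{uo-rep}. Monotone completeness of $X=Y^*$ is standard: the dual of any Banach lattice is monotonically complete (indeed order complete and with the relevant norm estimate), see \cite[Proposition~2.4.19]{MN:91}. Hence, by the parenthetical remark in Proposition~\ref{uo-rep}, the conditions ``$\Xu$ is isomorphically norming'' and ``$\sigma(X,\Xu)$ has the Krein-Smulian property'' hold automatically; in fact $\Xu=Y$ is isometrically norming here since $X=Y^*$. We also need $X$ to admit a strictly positive boundedly uo-continuous functional: a strictly positive $y_0\in Y_+$ (which exists because $Y$ has a weak unit and is order continuous) gives, under the identification $Y=\Xu$, a strictly positive element of $\Xu$, i.e.\ a strictly positive boundedly uo-continuous functional on $X$. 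Finally, $X$ is $\sigma$-order complete since it is even order complete as a dual Banach lattice, so Proposition~\ref{uo-rep} applies verbatim.

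With all hypotheses verified, Proposition~\ref{uo-rep} gives the equivalence of its conditions (1), (2), (3) for any proper convex $\rho:X\to(-\infty,\infty]$. Condition (2) there is literally condition (2) of Theorem~\ref{general-rep}, and likewise for (3). Condition (1) of Proposition~\ref{uo-rep} reads $\rho(x)=\sup_{\phi\in\Xu}(\phi(x)-\rho^*(\phi))$ with $\rho^*(\phi)=\sup_{x}(\phi(x)-\rho(x))$; under the identification $\Xu=Y$ and $\phi(x)=\la x,y\ra$ this is exactly condition (1) of Theorem~\ref{general-rep}. So the three conditions of the theorem are equivalent, completing the proof.

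The main obstacle, such as it is, is bookkeeping rather than mathematics: one must make sure the canonical identification $\Xu=Y$ is genuinely the evaluation pairing (so that ``$\rho$ is $\sigma(X,\Xu)$-lsc'' becomes ``$\rho$ is $\sigma(Y^*,Y)$-lsc'' with the right $\rho^*$), and that the existence of a strictly positive $y_0\in Y_+$ really follows from ``order continuous with weak units'' — this is where one invokes that such a $Y$ can be represented as an order dense ideal of some $L^1(\mu)$ with $\mu$ $\sigma$-finite, or directly constructs $y_0$ from a weak unit and order continuity. Everything else is an immediate appeal to Proposition~\ref{uo-rep}, Proposition~\ref{dual-uo-cont}, and the standard fact that dual Banach lattices are monotonically complete.
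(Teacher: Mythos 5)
Your overall route is exactly the paper's: identify $Y=\Xu$ via Proposition~\ref{dual-uo-cont} and then feed $X=Y^*$ into Proposition~\ref{uo-rep}, matching up the pairings. Two remarks on the details. First, your verification of ``norming $+$ Krein--Smulian'' through monotone completeness of dual lattices and the parenthetical remark in Proposition~\ref{uo-rep} is correct but roundabout; the paper simply observes that, since $X=Y^*$ and $Y=\Xu$, the topology $\sigma(X,\Xu)$ \emph{is} the weak$^*$ topology, so $\Xu$ norms $X$ and the classical Krein--Smulian theorem applies. Either way is fine.

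Second, the one substantive hypothesis --- that $X$ admits a strictly positive boundedly uo-continuous functional, i.e.\ that some $y_0\in Y_+$ acts as a strictly positive functional on $X=Y^*$ --- is asserted rather than proved, and your first paragraph actually aims at the wrong object: a strictly positive (order continuous) functional \emph{on} $Y$ is an element of $X$, not of $\Xu$, and is irrelevant here; likewise, being a strictly positive \emph{element} of the lattice $\Xu$ is not the same as being a strictly positive \emph{functional} on $X$, so the ``i.e.''\ in your second paragraph is precisely the point that needs an argument. Your fallback via representing $Y$ as an order dense ideal of $L^1(\mu)$ can be made to work, but it needs the identification of $Y^*=Y_n^\sim$ with the K\"othe dual, which is heavier than necessary. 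The paper's direct argument is two lines and you should include it: let $y$ be a weak unit of $Y$; by order continuity, $z\wedge ny\rightarrow z$ in norm for every $z\in Y_+$; hence if $x\in X_+$ and $\la x,y\ra=0$, then $\la x,z\ra=\lim_n\la x,z\wedge ny\ra=0$ for all $z\in Y_+$, so $x=0$. With this inserted, your proof is complete and coincides with the paper's.
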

 
\begin{proof}
By Proposition~\ref{dual-uo-cont}, $Y =\Xu$. 
Thus $\Xu$ norms $X$ and $\sigma(X,\Xu)$, being the weak$^*$ topology on $X$,
satisfies the Krein-Smulian
property.
Let $y \in Y_+$ be a weak unit in $Y$.  Since $Y$ is order continuous, $z\wedge
ny\rightarrow z$ in norm for any $z\in Y_+$.
It follows that if $x\in X_+$ and $\la x,y\ra =0$, then $\la x,z\ra =0$ for all
$z\in Y_+$, and hence $x =0$.
Therefore, $y$ acts as a strictly positive functional on $X$.
The result now follows from Proposition~\ref{uo-rep}.
\end{proof}

As was mentioned in the Introduction, a prime motivation for  Theorem
\ref{general-rep} comes from Theorem \ref{rep-orl-gx}, which is concerned with
Orlicz spaces.
In the discussion below, we   confine ourselves to probability  spaces as
they are the most important in  applications to risk measures in Mathematical
Finance.
Fix a probability space $(\Omega,\Sigma,\mathbb{P})$. Assume that it admits  an
infinite disjoint sequence of measurable sets of positive probabilities; for
otherwise the resulting spaces will be finite-dimensional.
An {\em Orlicz function} is a convex nondecreasing function $\Phi:[0,\infty)\to
\R$, not identically $0$,  such that $\Phi(0) =0$. 
For an Orlicz function $\Phi$, the {\em Orlicz space} 
$L^\Phi:=L^\Phi(\Omega,\Sigma,\mathbb{P})$ is the space of all measurable
functions $f$ such that 
\[ \|f\|_\Phi = \inf\Big\{\lambda> 0:
\int\Phi\Big(\frac{|f(\omega)|}{\lambda}\Big)\,\mathrm{d}\mathbb{P} \leq
1\Big\}\]
is finite.  The functional $\|\cdot\|_\Phi$ is a complete norm on $L^\Phi$,
called the {\em Luxemburg norm}.
The subspace of $L^\Phi$ consisting of all $f$ such that 
$ \int\Phi(\frac{|f(\omega)|}{\lambda})\,\mathrm{d}\mathbb{P} <\infty$ for all
$0 < \lambda<\infty$
is called the {\em Orlicz heart} $H^\Phi:=H^\Phi(\Omega,\Sigma,\mathbb{P})$.
Obviously, Orlicz spaces  are Banach lattices in the almost everywhere order.
Given an Orlicz function $\Phi$, its {\em conjugate function} $\Psi$ is defined
by 
\[\Psi(t) = \sup\{st -\Phi(s): s \geq 0\} \text{ for $t\geq 0$}.\]
Then $\lim_{t\rightarrow\infty}\frac{\Psi(t)}{t}=\infty$. 
Moreover, $\Psi$ is an Orlicz function
iff
$\lim_{t\rightarrow\infty}\frac{\Phi(t)}{t}=\infty$, and in this case,
the conjugate function of $\Psi$ is $\Phi$. 
Elements of the Orlicz space $L^\Psi$
(in particular, of $H^\Psi$) act on $L^\Phi$ (in particular, on $H^\Phi$), via
the duality
\[ \la f,g\ra = \int fg\,\mathrm{d}\mathbb{P}, \quad g\in L^\Psi,\;\; f \in
L^\Phi.\]
We collect some basic facts concerning Orlicz spaces in the next proposition. 
Refer to \cite[Chapter 2]{ES:02} and \cite{RR:91} for in-depth studies of these
spaces.  

\begin{proposition}\label{orlicz}
Let $\Phi$ be an Orlicz function with conjugate $\Psi$.  
Then $H^\Phi$ is the order continuous part of $L^\Phi$.  In particular, 
$H^\Phi$ is an order continuous Banach lattice.
Moreover,
$L^\Phi \neq L^1$ (as sets or as lattice isomorphic Banach lattices) if and only
if $\lim_{t\to\infty}\frac{\Phi(t)}{t} =\infty$. In this case, $L^\Psi =(L^\Phi)^\sim_n =  (H^\Phi)_n^\sim=(H^\Phi)^*$  
(equality as sets and as lattice isomorphic Banach lattices).
\end{proposition}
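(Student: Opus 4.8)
The plan is to prove Proposition~\ref{orlicz} by assembling standard facts about Orlicz spaces and identifying them with the abstract notions introduced earlier in the paper. First I would establish that $H^\Phi$ is the order continuous part of $L^\Phi$. The inclusion $H^\Phi\subset (L^\Phi)^a$ follows because a function $f\in H^\Phi$ has the property that $\int\Phi(|f|/\lambda)\,\mathrm{d}\mathbb{P}<\infty$ for every $\lambda$, and from this one checks directly (via dominated convergence applied to $\Phi(|f|\one_{A_n}/\lambda)$ for $A_n\downarrow\emptyset$ or for a disjoint sequence) that order intervals in $H^\Phi$ have no non-norm-null disjoint sequences, i.e.\ $f\in (L^\Phi)^a$. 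For the reverse inclusion, if $f\notin H^\Phi$ then there is $\lambda_0$ with $\int\Phi(|f|/\lambda_0)\,\mathrm{d}\mathbb{P}=\infty$; truncating $|f|$ on the sets where it is large produces a disjoint (or increasing-then-difference) sequence in $[0,|f|]$ whose Luxemburg norms stay bounded below, so $f\notin (L^\Phi)^a$. Since $(L^\Phi)^a$ is always an order continuous Banach lattice in its own right (as recalled in the excerpt), $H^\Phi$ is order continuous.

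Next I would handle the dichotomy $L^\Phi\neq L^1$ iff $\lim_{t\to\infty}\Phi(t)/t=\infty$. If $\Phi(t)/t\to c<\infty$, then $\Phi$ is equivalent (up to constants, using convexity and $\Phi(0)=0$) to $t\mapsto t$ on the relevant range, and since $\mathbb{P}$ is finite the Luxemburg norm $\|\cdot\|_\Phi$ is equivalent to $\|\cdot\|_1$, giving equality as lattice isomorphic Banach lattices and as sets. Conversely, if $\Phi(t)/t\to\infty$, one exhibits, using the assumed infinite disjoint sequence $(A_k)$ of sets of positive probability, a function in $L^1$ that is not in $L^\Phi$ (put mass on the $A_k$ growing fast enough relative to $\mathbb{P}(A_k)$ and the growth of $\Phi$), so $L^\Phi\subsetneq L^1$ even as sets; in particular they cannot be lattice isomorphic Banach lattices because a lattice isomorphism would carry $(L^\Phi)^a=H^\Phi$ onto $(L^1)^a=L^1$, forcing $L^\Phi$ to be order continuous, which it is not when $\Phi(t)/t\to\infty$ (the truncation argument above produces a non-order-continuous element, or one simply notes $H^\Phi\subsetneq L^\Phi$ in this case).

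For the duality statement, assume $\lim_{t\to\infty}\Phi(t)/t=\infty$, so that $\Psi$ is a genuine Orlicz function with conjugate $\Phi$. The classical Hölder inequality for Orlicz spaces gives $|\la f,g\ra|\leq 2\|f\|_\Phi\|g\|_\Psi$, so every $g\in L^\Psi$ defines a bounded functional on $L^\Phi$, and it is order continuous on $L^\Phi$ by dominated convergence (for an order-null net dominated by a fixed $h\in L^\Phi$, $\int |g|\,|x_\alpha|\to 0$). This gives a lattice embedding $L^\Psi\hookrightarrow (L^\Phi)^\sim_n$; it is isometric up to the factor $2$, hence a lattice isomorphism onto its range. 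Surjectivity onto $(L^\Phi)^\sim_n$ is the standard representation theorem for the order continuous dual of an Orlicz space over a finite measure space (every order continuous functional on $L^\Phi$ is integration against some $g$, and one shows $g\in L^\Psi$ via a duality/conjugacy computation); alternatively, since $H^\Phi$ is a norm closed order dense ideal of $L^\Phi$ when $\Phi(t)/t\to\infty$, Lemma~\ref{uo-tec-2} identifies $(L^\Phi)^\sim_n$ with $(H^\Phi)^\sim_n$ isometrically via restriction, and the well-known K\"othe duality $(H^\Phi)^*=(H^\Phi)^\sim_n=L^\Psi$ for the Orlicz heart finishes the identification; finally $(H^\Phi)^*=(H^\Phi)^\sim_n$ because $H^\Phi$ is order continuous.

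I expect the main obstacle to be the surjectivity half of the duality claim, i.e.\ showing that \emph{every} order continuous functional on $L^\Phi$ (equivalently every functional on $H^\Phi$) is represented by an element of $L^\Psi$ with the correct norm. The cleanest route is to cite the classical K\"othe/Orlicz duality literature (\cite{RR:91}, \cite{ES:02}) for $(H^\Phi)^*=L^\Psi$ and then reduce the $L^\Phi$ statement to it through Lemma~\ref{uo-tec-2}, which requires checking the two ingredients: that $H^\Phi$ is a norm closed ideal of $L^\Phi$ (clear from the definition of $H^\Phi$ and completeness of the Luxemburg norm) and that $H^\Phi$ is order dense in $L^\Phi$ (for $0<f\in L^\Phi$, the truncations $f\wedge n\one_{\{f\leq n\}}$ — or simply $f\one_{\{f\leq n\}}$ — lie in $H^\Phi$ and increase to $f$), together with the observation that this order denseness fails exactly when $\Phi(t)/t$ stays bounded, which is why the hypothesis $L^\Phi\neq L^1$ is needed. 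The rest is bookkeeping about constants in the Luxemburg norm versus the Orlicz (Amemiya) norm, which one can suppress since the statement only asks for lattice \emph{isomorphic} identification.
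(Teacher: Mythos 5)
The paper does not actually prove Proposition~\ref{orlicz}: it is presented as a collection of classical facts with references to \cite{ES:02} and \cite{RR:91}. Most of your outline is a reasonable way of backing up those citations, and your route to the duality statement (restriction from $L^\Phi$ to the norm closed order dense ideal $H^\Phi$ via Lemma~\ref{uo-tec-2}, combined with the classical K\"othe duality $(H^\Phi)^*=(H^\Phi)^\sim_n=L^\Psi$ and order continuity of $H^\Phi$) is sound and even uses the paper's own machinery nicely. The identification $H^\Phi=(L^\Phi)^a$ and the set-theoretic dichotomy are also essentially correct as sketched (for the construction of $f\in L^1\setminus L^\Phi$ you should note that $\mathbb{P}(A_k)\to 0$ and pass to a suitably sparse subsequence of the $A_k$, but this is routine).

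There is, however, one genuine error: in your argument that $\lim_{t\to\infty}\Phi(t)/t=\infty$ rules out a \emph{lattice isomorphism} between $L^\Phi$ and $L^1$, you claim that in this case $L^\Phi$ is not order continuous, i.e.\ $H^\Phi\subsetneq L^\Phi$. This is false: $H^\Phi=L^\Phi$ whenever $\Phi$ satisfies the $\Delta_2$-condition at infinity, and many $\Delta_2$ functions are superlinear --- e.g.\ $\Phi(t)=t^2$ gives $L^\Phi=L^2$, which is order continuous although $\Phi(t)/t\to\infty$. So ``$(L^\Phi)^a\neq L^\Phi$'' is not an invariant you can invoke here, and the parenthetical ``one simply notes $H^\Phi\subsetneq L^\Phi$ in this case'' is exactly the false step; the ``truncation argument'' likewise produces nothing when $\Phi\in\Delta_2$. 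To exclude a lattice isomorphism with $L^1$ one needs a different invariant; for instance, in the nonatomic case, a lattice copy of an AL-space satisfies a lower $\ell^1$-estimate for disjoint positive vectors, whereas for sets $A_1,\dots,A_n$ of measure $1/n$ one has $\bignorm{\sum_{k=1}^n\Phi^{-1}(n)\one_{A_k}}_\Phi=\Phi^{-1}(n)/\Phi^{-1}(1)=o(n)$ when $\Phi(t)/t\to\infty$; alternatively one can use de la Vall\'ee Poussin (the unit ball of $L^\Phi$ is uniformly integrable, while that of $L^1$ is not). A second, smaller slip: your remark that order denseness of $H^\Phi$ in $L^\Phi$ ``fails exactly when $\Phi(t)/t$ stays bounded'' is wrong --- since $\Phi$ is finite-valued and $\mathbb{P}$ is finite, $L^\infty\subset H^\Phi$, so $H^\Phi$ is \emph{always} order dense in $L^\Phi$. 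The hypothesis $\lim_{t\to\infty}\Phi(t)/t=\infty$ is needed for a different reason: it makes the conjugate $\Psi$ a finite-valued Orlicz function, so that $L^\Psi$, $H^\Psi$ and the duality $(H^\Phi)^*=L^\Psi$ are the correct objects.
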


Suppose that $X = L^\Phi$, where $\lim_{t\to\infty}\frac{\Phi(t)}{t} = \infty$.
Set $Y = H^\Psi$. 
By Proposition~\ref{orlicz} applied to the conjugate case,  $Y$ is an order continuous Banach lattice with weak units (in fact it contains the constant functions), and $X = Y^*$. 
Thus the equivalence of (1) and (3) in Theorem \ref{general-rep} yields
Theorem~\ref{rep-orl-gx}.

Proposition~\ref{orlicz} identifies $L^\Psi$ as the order continuous duals of
$L^\Phi$ and $H^\Phi$. We now identify $H^\Psi$ are their uo-duals.  
Indeed, by Proposition~\ref{orlicz}, $L^\Psi =(L^\Phi)^\sim_n = 
(H^\Phi)_n^\sim$ and $(L^\Psi)^a = H^\Psi$. Thus by Theorem~\ref{uo-dual-cont},
we obtain the following corollary, where equality again means equality as sets
and with equivalent norms.

\begin{corollary}\label{uo-duals}
Let $\Phi$ be an Orlicz function with conjugate $\Psi$.  Assume that
$\lim_{t\to\infty}\frac{\Phi(t)}{t} = \infty$. Then
$H^\Psi=(L^\Phi)_{uo}^\sim=(H^\Phi)_{uo}^\sim$. 
\end{corollary}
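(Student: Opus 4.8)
The plan is to deduce Corollary~\ref{uo-duals} directly from Theorem~\ref{uo-dual-cont} together with the structural facts about Orlicz spaces collected in Proposition~\ref{orlicz}. Recall that Theorem~\ref{uo-dual-cont} identifies the uo-dual of an arbitrary Banach lattice $Z$ as the order continuous part $(Z_n^\sim)^a$ of its order continuous dual. So the computation of $(L^\Phi)_{uo}^\sim$ and $(H^\Phi)_{uo}^\sim$ reduces to (i) identifying the order continuous duals of $L^\Phi$ and $H^\Phi$, and (ii) identifying the order continuous part of that dual.

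First I would invoke Proposition~\ref{orlicz}, applied in the stated (non-$L^1$) case $\lim_{t\to\infty}\Phi(t)/t = \infty$, which gives $(L^\Phi)_n^\sim = (H^\Phi)_n^\sim = L^\Psi$ as lattice isomorphic Banach lattices. Then by Theorem~\ref{uo-dual-cont},
\[
(L^\Phi)_{uo}^\sim = \big((L^\Phi)_n^\sim\big)^a = (L^\Psi)^a
\quad\text{and}\quad
(H^\Phi)_{uo}^\sim = \big((H^\Phi)_n^\sim\big)^a = (L^\Psi)^a.
\]
It remains to identify $(L^\Psi)^a$. Here I would apply the first assertion of Proposition~\ref{orlicz} to the \emph{conjugate} Orlicz function $\Psi$ (whose conjugate is $\Phi$, since $\lim_{t\to\infty}\Phi(t)/t=\infty$ guarantees $\Psi$ is a genuine Orlicz function and biconjugation returns $\Phi$): this yields that $H^\Psi$ is the order continuous part of $L^\Psi$, i.e.\ $(L^\Psi)^a = H^\Psi$. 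Chaining these equalities gives $H^\Psi = (L^\Phi)_{uo}^\sim = (H^\Phi)_{uo}^\sim$, with all identifications holding as sets and with equivalent norms, as asserted.

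There is essentially no serious obstacle here; the only point requiring a moment's care is the bookkeeping of which direction of conjugacy is being used and the verification that $\Psi$ is itself an Orlicz function so that Proposition~\ref{orlicz} applies to it — both of which are handed to us by the hypothesis $\lim_{t\to\infty}\Phi(t)/t=\infty$ and the remarks preceding Proposition~\ref{orlicz}. One should also note, as the excerpt already does in the sentence preceding the corollary, that since $(L^\Psi)^a$ coincides whether the order interval $[0,\abs{\phi}]$ is computed in $(L^\Phi)_n^\sim$ or in $(L^\Phi)^*$, the identification of the order continuous part is unambiguous.
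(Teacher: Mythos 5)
Your proposal is correct and follows essentially the same route as the paper: the paper's argument is exactly to combine Proposition~\ref{orlicz} (giving $L^\Psi=(L^\Phi)_n^\sim=(H^\Phi)_n^\sim$ and, applied to the conjugate $\Psi$, $(L^\Psi)^a=H^\Psi$) with Theorem~\ref{uo-dual-cont}, which identifies the uo-dual as the order continuous part of the order continuous dual. Your extra remarks on the direction of conjugacy and on $\Psi$ being a genuine Orlicz function are accurate but add nothing beyond the paper's implicit bookkeeping.
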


When the Orlicz function $\Phi$ is an N-function and the underlying measure
space is finite and separable, it is proved  in \cite[pp.~148,
Proposition~6]{RR:91} that, for any $g\in H^\Psi$, $\int f_ng\rightarrow
\int fg$ whenever $(f_n) $ is norm bounded in $L^\Phi$ and
$f_n\xrightarrow{a.e.}f$.
In our notation, this means  that $H^\Psi\subset (L^\Phi)_{uo}^\sim$.
Corollary \ref{uo-duals} shows that the technical assumptions in the result can
be removed and also establishes the converse.

Finally, let us consider Proposition~\ref{buo-closure} and Corollary
\ref{uo-closed-all} in the context of Orlicz spaces.
Assume that  $(\Omega,\Sigma,\mathbb{P})$ is a nonatomic probability space and
let $\Phi$ be an Orlicz function so that $\lim_{t\to\infty}\frac{\Phi(t)}{t} =
\infty$. Set $X = H^\Phi$.
 Proposition~\ref{buo-closure} says that $\overline{C}^{buo} =
\overline{C}^{\sigma(H^\Phi,H^\Psi)}$ for every convex set $C$ in $H^\Phi$ if
and only if $L^\Psi=(H^\Phi)^*$ is order continuous, or  equivalently, $\Psi$
satisfies the
$\Delta_2$-condition. 
On the other hand,  Corollary \ref{uo-closed-all} says that every boundedly uo
closed convex set is $\sigma(H^\Phi,H^\Psi)$-closed if and only if
$\sigma(H^\Phi,H^\Psi)$ has the Krein-Smulian property.
In \cite[Theorem~4.3]{GLX:17}, it is shown that the latter holds if and only if
either $\Phi$ or $\Psi$ satisfies the
$\Delta_2$-condition. 
The difference in the two assertions above points to the subtle fact that the
bounded uo closure is not necessarily boundedly uo closed!

\end{document}